%


\documentclass{SCAEOL}
\numberwithin{equation}{section}
\usepackage{multirow}
\usepackage{endnotes}
\usepackage{algorithmic}
\usepackage{algorithm}
\begin{document}

\Year{2018} %
\Month{April}
\Vol{60} %
\No{1} %
\BeginPage{1} %
\EndPage{XX} %
\AuthorMark{Tao Jie {\it et al.}}

\title{Coordinate Descent algorithm to compute the minimum volume enclosing ellipsoid}{}


\author[1]{Tao Jie}{}
\author[2]{Zhang Wei}{Corresponding author}
\author[3]{Lu Chao}{}

\address[{\rm1}]{School of Business, University of Shanghai for Science and Technology, Yangpu, Shanghai {\rm 200093}, China;}
\address[{\rm2}]{The Logistics Institute, Asia-Pacific, National University of Singapore {\rm 119613}, Singapore;}
\address[{\rm3}]{School of Management, Shanghai University, Baoshan, Shanghai {\rm200444}, China}
\Emails{taojie@usst.edu.cn,
lindelfeel@gmail.com, 06luchao@163.com}\maketitle


 {\begin{center}
\parbox{14.5cm}{\begin{abstract}
The minimum volume enclosing ellipsoid (MVEE) problem is an optimization problem at the root of many practical problems. This paper describes some new properties such as ``algorithmic coordinate-wise smoothness'' of this model and proposes a steepest descent type algorithm, the Coordinate Descent (CD) algorithm, to address the MVEE problem. We prove that not only the function values converges to the optimal value, but also the iteration sequence converges to the optimal solution. The CD algorithm is sublinearly convergent and slightly faster than the other algorithms, especially in cases where the dimension of the data is large. Furthermore, we provide a new interpretation for ways of choosing the coordinate axis of the Frank-Wolfe type algorithm from the perspective of the smoothness of the coordinate axis, i.e., the Khachiyan's algorithm uses the ``Nesterov's Rule'', while the Wolfe - Atwood's algorithm uses the ``Gauss - Southwell's Rule''. Moreover we compare our algorithm with the random coordinate descent method (RCD) in [``Nesterov, Y. Efficiency of coordinate descent methods on huge - scale optimization problems. SIAM Journal on Optimization, 2011, 22(2): 341-362''] and show the RCD algorithm is less efficient than the CD algorithm in computing the MVEE. The numerical tests support our theoretical results.\vspace{-3mm}
\end{abstract}}\end{center}}

 \keywords{Minimum Volume Enclosing Ellipsoid; First-order oracle Algorithm; Coordinate Descent}

 \MSC{90-08, 90C25, 90C30}

\renewcommand{\baselinestretch}{1.2}
\begin{center} \renewcommand{\arraystretch}{1.5}
{\begin{tabular}{lp{0.8\textwidth}} \hline \scriptsize
{\bf Citation:}\!\!\!\!&\scriptsize Tao Jie, Zhang Wei, Lu Chao.  SCIENCE CHINA Mathematics  journal sample. Sci China Math, 2017, 60, doi: 10.1007/s11425-000-0000-0\vspace{1mm}
\\
\hline
\end{tabular}}\end{center}

\baselineskip 11pt\parindent=10.8pt  \wuhao
\section{Introduction}
The minimum volume enclosing ellipsoid (MVEE) (also known as the L\"{o}wner ellipsoid) of an arbitrary given point set $X=\{x_1, \ldots, x_m\}$ is studied in this paper, where $ x_i \in \mathbb{R}^n$. In addition to several traditional applications discussed in \cite{Kumar2005}, recently the MVEE has arisen in a number of applications, including nonlinear support vector machines (\cite{Kumar2011}), bio-informatics (\cite{Fleming2017}), the geometry of differential privacy (\cite{Nikolov2016}), flow feature extraction (\cite{Li2015}), sensor selection (\cite{Joshi2009}), and statistics (\cite{Ahipasaoglu2015}).

F. John first considered the MVEE enclosing a compact set $X \subseteq \mathbb{R}^{n}$. He showed any ellipsoid in $\mathbb{R}^n$ is determined by at most $\frac{n(n+1)}{2}$ points, which lies at the basis of the core set theory. John also showed the existence of the $n$-rounding ellipsoid for an arbitrary point set $X$ (see \cite{John2014}).

Generally, the methods to address the MVEE problem can be categorized into two parts, the Newton-type method and the Frank-Wolfe type method. With respect to the Newton-type method, \cite{Nesterov1994} developed an interior-point algorithm that computes a $(1+\epsilon)-$ approximation to the MVEE in $O(m^{3.5}\ln (\frac{Rm}{\epsilon r}))$ arithmetic operations, where $B_r \subseteq \textrm{conv.hull}(X) \subseteq B_R$ for some Euclidean balls $B_r$ and $B_R$ with radius $0 \leq r \leq R$, $\epsilon$ is the error bound. \cite{Vandenberghe1998}) used the interior point method to solve the max $\det$ problem, which can also be used to solve the MVEE problem. \cite{Sun2004} proposed the ``dual reduced-Newton (DRN)'' algorithm to solve the MVEE problem. This algorithm performs very well for problems with moderately sized samples, while active-set strategies could help in large-scale cases. However, memory problems occur, and the efficiency of the algorithm is reduced when the dimension of the data set grows.

With respect to the Frank-Wolfe method (sometimes called the conditional gradient method), \cite{Khachiyan1996} proposed the notion of $\epsilon$-primal feasibility and computed a $(1+\epsilon)n$ rounding ellipsoid of an arbitrary point set $X$ in $O(mn^2(\frac{1}{\epsilon}+\ln n +\ln \ln m))$ arithmetic operations. Based on the pioneering work of Khachiyan, some progress has been made recently. \cite{Kumar2005} presented an efficient initialization scheme, which could reduce the complexity bound to $O(mn^2(\frac{1}{\epsilon}+\ln n))$. In addition, their algorithm established the existence of an $\epsilon$-core set which could be of great significance in computational geometry. For more details of the core-set, see \cite{Clarkson2010}. Later, \cite{Yildirim2006} applied this algorithm to compute the minimum volume covering ellipsoid of ellipsoids. \cite{Todd2007} modified the work of Kumar and Yildirim with a dropping technique, which is equivalent to the ``away step'' in the Frank-Wolfe method. Coincidentally, this ``away step'' technique was similar to the one adopted by \cite{Atwood1973} in the D-Optimal problem. Therefore this algorithm is also called the Wolfe-Atwood (WA) algorithm. \cite{Ahipasaoglu2008} introduced the $\epsilon$-approximate optimality notion and proved the local linear convergence of the WA algorithm for the MVEE problem, and they showed that the convergence rate is perturbed by a data-dependent constant. Based on this, \cite{Kumar2011} proposed to apply the Frank-Wolfe method with away steps to the support vector classification problem and showed the linear convergence of the algorithm.

The idea of adopting the gradient-type coordinate descent method into the MVEE problem is from two aspects: One is that just as the Frank - Wolfe type algorithm, the coordinate descent method changes only one element of the iteration point $u$ in each iteration, thus the arithmetic operations in each iteration, such as the computation of the inverse and the determinant of a matrix could be simplified via the rank-one modification formula. Therefore the coordinate descent algorithm inherits the advantage of the Frank - Wolfe type algorithm. The other is that the gradient type algorithm is one of the most mature method, and efficient implementation techniques, such as coordinate axis choosing rules can be borrowed. Therefore, in this paper, we integrate the idea of the WA algorithm with the coordinate descent algorithm and propose the coordinate descent (CD) algorithm to address the MVEE problem. {\bf We here stress that although the WA algorithm on the MVEE problem can also be seen as a kind of coordinate descent algorithm, however, the way of choosing axis, the iterative direction and the stepsize are different with the gradient type coordinate descent algorithm. The comparison of the CD algorithm and the WA algorithm is illustrated in section 3.4.}

The rest of this paper is organized as follows: in the second section, we elaborate on the theory and algorithm of the MVEE problem, including the formulation of the model, the existence of the optimal solution, duality and the algorithm to solve the model. In the third section, we introduce the notion of ``algorithmic'' coordinate - wise smoothness and apply this notion to propose the CD algorithm. Moreover, we give new explanation to the WA algorithm and the Khachiyan's algorithm. Specifically, the WA algorithm uses the ``Gauss - Southwell's Rule'' to choose the coordinate axis, while the Khachiyan's algorithm uses the ``Nesterov's Rule'' to choose the coordinate axis. In the fourth section, followed by the idea of Ahipasaoglu (2008) we prove the globally sublinear convergence rate and the locally linear convergence rate of the CD algorithm. In the fifth section, several numerical examples are included, demonstrating that the CD algorithm may be slightly faster than the WA algorithm as the dimension grows. In the sixth section, we discuss more versions of the CD algorithm such as, the CD algorithm with various stepsizes and the random coordinate descent (RCD) algorithm (enlightened by \cite{Nesterov2011}), and compare their computational performance theoretically. In the seventh section, we give a new application of the MVEE to the big data data envelopment analysis problem. Finally, we provide the conclusions and the directions for future research.

\subsection{Notations}

Unless otherwise specified, all norms $\| \cdot \|$ used in this paper are the Euclidean norms. $\| x \|_H =\sqrt{x^T H x}$ represents the $H$-shaped ellipsoidal norm with $H$ being positive definite. $\nabla f(x)$ represents the gradient of function $f$ with respect to the vector $x$, while $\nabla^2 f(x)$ represents the Hessian matrix of function $ f $ with respect to the vector $x$. $e \in \mathbb{R}^n$ is an $n$ dimensional vector in Euclidean space with all the elements being 1. $\mathbb{S}^n$, $\mathbb{S}_{+}^n$, and $\mathbb{S}_{++}^n$ denote the sets of, respectively, the symmetric $n \times n$ matrix, the symmetric positive semidefinite $n \times n$ matrix, and the symmetric positive definite $n \times n$ matrix. $X$ represents the point set of $m$ points, as well as the matrix formed by these $m$ points, i.e., $X=[x_1,\ldots, x_m], x_i \in \mathbb{R}^n, i=1,\ldots,m$. $aff(X)$ represents the affine hull of the set $X$. $I_n$ represents the unit matrix of order $n$. Let $u \in \mathbb{R}^n$, $U$ represents a diagonal matrix of order $n$ formed by the elements of $u$. $\triangle_n$ represents a probability simplex in the $n$ dimensional space, that is, $\triangle_n=\{x | e^T x =1, x \in \mathbb{R}^n_{+}\}$. $\textrm{Vol}(B_n)$ represents the volume of an $n$ - dimensional unit ball whose center is at the origin, that is: $\textrm{Vol}(B_n)=\frac{(\pi)^{\frac{n}{2}}}{\Gamma (\frac{n}{2}+1)}$. ``w.r.t'' is the abbreviation of ``with respect to''.

\section{Theory and Algorithm of MVEE}
\subsection{Volume of $n$-ellipsoid}
An ellipsoid $E(H,c)$ can be expressed as $E(H,c)=\{x | (x-c)^T H (x-c) \leq n\}$, where $c \in \mathbb{R}^n$ and $H \in \mathbb{S}_{++}^n$. Applying the Cholesky factorization to $H \in \mathbb{S}_{++}^n$, we obtain a lower triangular matrix $L$ such that $H=LL^T$. Hence the ellipsoid $E(H,c)$ can also be expressed as $\{x | \| L^T (x-c) \| \leq \sqrt{n}\}$ or $\{x| x=c+(\sqrt{n}L^{-T})z, \| z \| \leq 1\}$, that is, the ellipsoid $E(H,c)$ is the result of the affine transformation $x^{'}=(\sqrt{n}L^{-T})x+c$ of a unit ball centered at the origin. Since the translation does not affect the volume, and the linear transformation $y = \sqrt{n}L^{-T} x$ changes the volume of the unit ball to $\det(\sqrt{n}L^{-T})\textrm{Vol}(B_n)$, it follows that the volume of the ellipsoid in $n$-dimensional space is $\frac{n^{\frac{n}{2}}\textrm{Vol}(B_n)}{\sqrt{\det(H)}}$, that is, the larger $\det(H)$ is, the smaller the volume of $E(H,c)$ is.

\subsection{Model of MVEE}
The MVEE $E(H,0)$ of a centrally symmetric set $X$ can be modeled as an optimization problem (see \cite{Sun2004} for other equivalent forms of MVEE  model)
\[
\max_{H \in \mathbb{S}^n_{++}} \det H, s.t. X \subseteq E(H,0),
\]
or
\[
\min_{H \in \mathbb{S}^n_{++}} -\det H,
\begin{array}{cl}
s.t. & (x_i)^T H (x_i) \leq n, i=1,\ldots, m.\\
\end{array}
\]
Since $-\det H$ is not a convex function, the above optimization problem is difficult to solve. A common method to overcome this difficulty is to convert the objective function into $-\ln \det H$, since $-\ln \det H$ is a convex function (see \cite{Boyd2004}). Thus, the model of $E(H,0)$ can be converted into a convex optimization problem ($P$):
\begin{equation}\tag{$P$}
\begin{array}{cl}
\min_{H \in \mathbb{S}_{++}^n} & f(H)=-\ln \det H,\\
s.t. & (x_i)^T H (x_i) \leq n, i=1,\ldots, m.\\
\end{array}
\end{equation}

\subsection{Existence and Uniqueness of Solutions}
As the objective function of model ($P$) is continuous, \cite{Todd2016} noted that if the set of all the feasible points is compact, the existence of the solution can be proven by the Weierstrass's Theorem. Unfortunately, $H \in \mathbb{S}^n_{++}$ is an open set and this theorem cannot be used directly. \cite{Todd2016} proposed to substitute the constraint $H \in \mathbb{S}^n_{++}$ with $-\ln \det H \leq -\ln \det (\epsilon I_n)$ for a small enough $\epsilon>0$, so that the set of feasible points is compact and the solution of model ($P$) exists. In addition, The uniqueness of the solution is owing to the strict convexity of $-\ln \det H$ on the feasible set.

\subsection{Duality and Optimality Conditions}
The Lagrange function of model ($P$) is: $L(H,u)=-\ln \det H + \sum\limits_{i=1}^m u_i (x_i^T H x_i - n)$, and the dual function is:
\[
q(u)=\min_{H \in \mathbb{S}^n_{++}} L(H,u)=\{\begin{array}{cl}
\ln \det (XUX^T) + n(1-e^T u), & \textrm{if}\  H^{-1}=XUX^T,\\
-\infty, & \textrm{otherwise},\\
\end{array}
\]
where $U$ is a diagonal matrix with $U_{ii}=u_i$. So it follows that the Lagrange dual problem ($D_1$) is:
\begin{equation}\tag{$D_1$}
\begin{array}{cl}
\min_{u} & h(u) = - \ln \det (XUX^T) +n(e^T u - 1),\\
st. & u \geq 0.\\
\end{array}
\end{equation}
Since $\forall u^{'} \geq 0$, there exists a $u \in \mathbb{R}_{+}^n$ with $e^T u=1$ and a $\lambda \geq 0$ such that $u^{'} = \lambda u$. Substituting these into model ($D_1$), we obtain that the optimal value of model ($D_1$) is attained at $\lambda=1$. Therefore, the dual problem can also be written as ($D_2$):
\begin{equation}\tag{$D_2$}
\begin{array}{cl}
\min_u & g(u)= - \ln \det (XUX^T),\\
s.t. & u \in \triangle_m \\
\end{array}
\end{equation}

The optimality conditions, the weak and strong duality can be referred to \cite{Ahipasaoglu2008}, \cite{Todd2016}, which will not be discussed in detail here.

\subsection{MVEE of an arbitrary point set}
\cite{Khachiyan1996} noted that the MVEE of an arbitrary point set $X$ can be obtained by computing the MVEE of a centrally symmetric point set as follows:

(1) Lift up the point $x_i \in X \subseteq \mathbb{R}^n$ into $\mathbb{R}^{n+1}$:
\[
\begin{array}{cc}
\mathcal{A}:& \mathbb{R}^n \rightarrow \mathbb{R}^{n+1}\\
\ & x_i \rightarrow y_i=(x_i,1),\\
\end{array}
\]
and let $Y=\{\pm y_i\}_{i=1}^m$, i.e., $Y$ is a centrally symmetric point set containing $2m$ points.

(2) We can use model ($P$) or model ($D_2$) to compute the MVEE of $Y$. Denoting the optimal solution pair as $(H^{*},u^{*})$, we have:
\[
\begin{array}{cl}
H^{*} &= (YU^{*}Y^{T})^{-1}\\
\ &= (\left(
      \begin{array}{c}
        X \\
        e^T \\
      \end{array}
    \right) U^{*} \left(
      \begin{array}{c}
        X \\
        e^T \\
      \end{array}
    \right)^T)^{-1}\\
\ &= (\left(
        \begin{array}{cc}
          XU^{*}X^T & Xu^{*} \\
          (Xu^{*})^T & 1 \\
        \end{array}
      \right)
)^{-1}\\
\ & = (\left(
          \begin{array}{cc}
            I_n & Xu^{*} \\
            0 & 1 \\
          \end{array}
        \right)
\left(
  \begin{array}{cc}
    XU^{*}X^T-(Xu^{*})(Xu^{*})^T & 0 \\
    0 & 1 \\
  \end{array}
\right)
\left(
  \begin{array}{cc}
    I_n & 0 \\
    (Xu^{*})^T & 1 \\
  \end{array}
\right)
)^{-1}\\
\ & = \left(
        \begin{array}{cc}
          I_n & 0 \\
       -(Xu^{*})^T & 1\\
        \end{array}
      \right)
      \left(
  \begin{array}{cc}
    (XU^{*}X^T-(Xu^{*})(Xu^{*})^T)^{-1} & 0 \\
    0 & 1 \\
  \end{array}
\right)
\left(
          \begin{array}{cc}
            I_n & -Xu^{*} \\
            0 & 1 \\
          \end{array}
        \right).
\end{array}
\]
Let $H_{X}=(XU^{*}X^T-(Xu^{*})(Xu^{*})^T)^{-1}$, $c=Xu^{*}$, we have
\[
(x_i^T, 1) \left(
        \begin{array}{cc}
          I_n & 0 \\
       -c^T & 1\\
        \end{array}
      \right)
      \left(
  \begin{array}{cc}
    H_{X} & 0 \\
    0 & 1 \\
  \end{array}
\right)
\left(
          \begin{array}{cc}
            I_n & c \\
            0 & 1 \\
          \end{array}
        \right)
\left(
          \begin{array}{c}
            x_i\\
            1 \\
          \end{array}
        \right) \leq n+1,
\]
or equivalently
\[
(x_i - c)^T H_{X} (x_i - c) \leq n.
\]
From the above analysis, it can be seen that the MVEE of an arbitrary point set $X$ can be obtained by computing the MVEE of a centrally symmetric point set. Therefore, we can focus on addressing model ($P$) and ($D_2$).

\subsection{Algorithm}
The classical method to address model ($P$) is the Newton type algorithm (see \cite{Sun2004}), while the classical method to address model ($D_2$) is the Frank-Wolfe (FW) type algorithm (see \cite{Khachiyan1996}, \cite{Kumar2005}, \cite{Ahipasaoglu2008}, \cite{Cong2012}, \cite{Cong2017}). Numerical experiments show that the FW type algorithm is more efficient for the case of large-scale or huge-scale data sets (especially for high-dimensional data sets) than the Newton type algorithm (\cite{Todd2016}). \cite{Atwood1973}
was the first to apply the FW type algorithm to compute the MVEE (in fact, Atwood was trying to address the D-optimal design problem in statistics, which is the same as the duality problem in MVEE. Therefore, Atwood can be considered the first to apply the FW algorithm to the MVEE problem). \cite{Khachiyan1996} was the first to obtain the computational complexity of the FW type algorithm on the MVEE problem. \cite{Kumar2005} extended the work of \cite{Khachiyan1996}, and proposed a new initialization scheme to improve the efficiency of the algorithm. Based on the work of Kumar and Yilidrim, \cite{Ahipasaoglu2008} proposed the ``dropping points'' technique and discovered the local linear convergence rate of the FW algorithm on the MVEE problem. Table 1 compares the computational complexity of different classical methods for solving the MVEE problem.

\begin{table}\label{tab1}
\centering
\begin{tabular}{|c|c|}
\hline
Algorithm & Complexity \\
\hline
Interior Point Algorithm & $O(m^{3.5} \ln (\frac{m}{\epsilon}))$ \\
\hline
FW-K  Algorithm & $O(mn^2([(1+\epsilon)^{\frac{2}{n+1}}-1]^{-1}+\ln n +\ln \ln m))$\\
\hline
FW-KY Algorithm & $O(mn^2([(1+\epsilon)^{\frac{2}{n+1}}-1]^{-1}+\ln n))$\\
\hline
\end{tabular}
\caption{Computation complexity of classical algorithms to compute the MVEE.}
\end{table}

In the following, the FW-K and WA algorithms are introduced as examples. Suppose the optimal solution pair is $(H^{*},u^{*})$, then the optimality condition yields:
\[
\begin{array}{l}
u_i^{*} \geq 0, i=1, \ldots, m,\\
e^{T} u^{*} = 1,\\
x_i^T H^{*} x_i \leq n, i=1, \ldots, m,\\
u_i^{*} (x_i^T H^{*} x_i - n) = 0, i=1, \ldots, m,\\
(X U^{*} X^T)^{-1} = H^{*}.\\
\end{array}
\]
Therefore, $u$ is the optimal point of model ($D_2$) if and only if $H(u) = (XUX^T)^{-1}$ is feasible for model ($P$). Based on this, \cite{Khachiyan1996} proposed the notion of the $\epsilon$-primal feasible solution.

\begin{definition}
Suppose $u$ is a feasible solution of ($D_2$), $u$ is said to be $\epsilon$-primal feasible if $H(u)=(XUX^T)^{-1}$ satisfies $x_i^T H(u) x_i \leq (1+\epsilon)n,i=1,\ldots,m$.
\end{definition}

In the FW-K algorithm, the objective function is linearized to generate subproblems, and the optimal solutions of the subproblems are used to generate the descent direction. Using this direction and the corresponding optimal step size, model ($D_2$) can be solved iteratively. If the current iteration point is $u_k$, the linearized subproblem of model ($D_2$) is:
\[
\begin{array}{cl}
\max_u & \kappa(u_k)^T u \\
st. & u \in \triangle_m \\
\end{array}
\]
where $\kappa(u_k) =\nabla g(u_k)$, and $\kappa(u_k)_i$ represents the $i$-th component of $\kappa(u_k)$. Suppose $j=\textrm{arg}\max_{i=1,\ldots,m} \kappa(u_k)_i$, and then $e_j$ is the optimal solution of the above subproblem. So if the optimal step size is $\lambda_k$, the new iteration point is $u_{k+1}=u_k + \lambda_k(e_j - u_k)$, or equivalently, $u_{k+1}$ can be seen as a convex combination of $u_k$ and $e_j$. As $\lambda_k$ is chosen to minimize the objective function $g(u)$, it follows that:
\[
\frac{d}{d\lambda}\ln \det (u_k+\lambda(e_j - u_k)) = 0 \Rightarrow \lambda_k = \frac{\kappa_j - n}{n(\kappa_j - 1)}.
\]
We describe the FW-K algorithm as follows:
\begin{algorithm}[H]
\caption{FW-K algorithm for model ($D_2$)}\label{1stAlgorithm}
\begin{algorithmic}[1]
\STATE {Input:$X=\{x_1, \ldots, x_m\}$, $\epsilon > 0$;}
\STATE {Initialization: $u_0=(\frac{1}{m}, \ldots, \frac{1}{m})$;}
\WHILE {not converged}
\STATE {$j$ $\leftarrow \arg\max_{i=1,\ldots, m} \kappa(u_k)^i$,$\lambda_k = \frac{\kappa_j - n}{n(\kappa_j - 1)}$, $u_{k+1} \leftarrow u_k + \lambda_k (e_j - u_k)$}
\STATE {Output: $u^{*}=u_k$}
\ENDWHILE
\end{algorithmic}
\end{algorithm}

In the FW-K algorithm, the initial point $u_0$ is initialized as $u_0=(\frac{1}{m}, \ldots, \frac{1}{m})$. \cite{Kumar2005} proposed a core set based initialization scheme, which is more effective than Khachiyan's initialization scheme. Furthermore, since the FW-K algorithm, as well as the FW-KY algorithm, yields only an $\epsilon$-primal feasible solution, which fails to satisfy the complementary slackness condition, \cite{Ahipasaoglu2008} proposed the notion of $\epsilon$-approximately optimal solution to address this deficiency (see also \cite{Todd2007}, \cite{Todd2016}):

\begin{definition}
If $u$ is an $\epsilon$-primal feasible solution satisfying $x_i^T H(u) x_i \geq (1-\epsilon)n,i=1,\ldots,m$ when $u_i > 0$, then $u$ is an $\epsilon$-approximately optimal solution.
\end{definition}

To improve the FW-K algorithm, \cite{Todd2007} and \cite{Ahipasaoglu2008} introduced the ``away-step'' strategy of the Frank-Wolfe algorithm, resulting in the Wolfe-Atwood (WA) algorithm.

\begin{algorithm}[H]
\caption{WA algorithm for model ($D_2$)}\label{2ndAlgorithm}
\begin{algorithmic}[1]
\STATE {Input:$X=\{x_1, \ldots, x_m\}$, $\epsilon > 0$;}
\STATE {Initialization: Using the Kumar - Yildirim initialize scheme to initialize $u_0$;;}
\WHILE {not converged}
\STATE {$j_{+} \leftarrow \arg \max_{i=1,\ldots, m} \kappa(u_k)_i$, $\kappa_{+} \leftarrow \kappa(u_k)_{j+}$; $j_{-} \leftarrow \arg \min_{i|u_i>0} \kappa(u_k)_i$, $\kappa_{-} \leftarrow \kappa(u_k)_{j-}$;$\epsilon_{+} \leftarrow \frac{\kappa_{+}}{n}-1, \epsilon_{-} \leftarrow 1- \frac{\kappa_{-}}{n}$, $\epsilon_k \leftarrow \max\{\epsilon_{+}, \epsilon_{-}\}$;}
\IF {$\epsilon_k = \epsilon_{+}$, then $\lambda_k = \frac{\kappa_j - n}{n(\kappa_j - 1)}$,}
\STATE {$u_{k+1} \leftarrow u_k + \lambda_k (e_{j+} - u_k)$},
\ELSE
\STATE {$\lambda_k = \min \{\frac{n - \kappa_j}{n(\kappa_j - 1)}, \frac{u_{j-}}{1 - u_{j-}}\}$, $u_{k+1} \leftarrow u_k + \lambda_k ( u_k- e_{j-})$;}
\ENDIF
\ENDWHILE
\STATE {Output: $u^{*}=u_k$}
\end{algorithmic}
\end{algorithm}

There are certain noteworthy points about the WA algorithm proposed by \cite{Todd2007}
\begin{enumerate}
\item In the $k$-th iteration, some of the $m$ points fall inside the ellipsoid, corresponding to $x_i^T (XUX^T)^{-1} x_i \leq n$, while the other points fall outside of the ellipsoid, corresponding to $x_i^T (XUX^T)^{-1} x_i > n$. The goal of the current iteration is to make each element of the gradient of the objective function (i.e., $x_i^T (XUX^T)^{-1} x_i$) as close to $n$ as possible until $(1-\epsilon) n \leq x_i^T (XUX^T)^{-1} x_i \leq (1+\epsilon)n$ holds for each $i$.

\item Choose an $i \in I$ and set $u^{\dagger} = u+\lambda (e_i - u)$, where $\{x_i | i \in I\}$ denotes the set of points outside the ellipsoid currently. The equality $x_i^T (XU^{\dagger}X^T)^{-1} x_i=n$, or equivalently,
\[
x_i^T [(1-\lambda)XUX^T + \lambda x_i x_i^T]^{-1} x_i =n,
\]
yields $\lambda = \frac{\kappa(u_k)_i - n}{n( \kappa(u_k)_i-1)}$, which is consistent with the precise step size $\lambda$ obtained in Khachiyan's algorithm.

This $\lambda$ can guarantee the selected point $x_i$ to fall on the boundary of the ellipsoid in the next iteration. Similarly, if the selected point $x_j$ is inside the ellipsoid, choosing $\lambda = \frac{n - \kappa(u_k)_j}{n( \kappa(u_k)_j -1)}$ as the step size also makes the point $x_j$ satisfy $x_j^T H(u^{\dagger}) x_j = n$ in the next iteration, i.e., fall on the boundary of the ellipsoid in the subsequent iteration. In this way, each component of the gradient of the objective function asymptotically approaches to $n$.

\item In order to compute the initial gradient $\kappa(u_0)$, we can resort to the scaled Cholesky factorization with $\frac{1}{3}n^3$ arithmetic operations to obtain $XUX^T = \phi L L^T$. On one hand it is convenient to compute $(XUX^T)^{-1}$ via the Cholesky factorization. On the other hand, the Cholesky factorization of $(XU^{\dagger}X^T)$ corresponding to the next iteration point $u^{\dagger}$ can be obtained through Cholesky rank-one modification formula: $\frac{\phi}{1-\lambda} (XU^{\dagger}X^T) = LL^T + \frac{\lambda}{1-\lambda} \phi x_j x_j^T$. In addition, it is also convenient to update the inverse matrix $(XU^{\dagger}X^T)^{-1}$ via the rank-one modification formula. Specifically:
\[
\begin{array}{cl}
(XU^{\dagger}X^T)^{-1} & = ((1 - \lambda)XUX^T + \lambda x_j x_j^T)^{-1} \\
\  & = \frac{1}{1-\lambda} [(XUX^T)^{-1} + \frac{\lambda}{1+\lambda(\kappa(u)_j - 1)} (XUX^T)^{-1} x_j x_j^T (XUX^T)^{-1}] \\
\end{array}
\]
Assume we have performed the scaled Cholesky decomposition on $XUX^T$, then it is easy to obtain $\hat{x}_j = (XUX^T)^{-1} x_j$ with $2n^2$ arithmetic operations, so that the gradient in the next iteration $\kappa(u^{\dagger})_i = \frac{1}{1-\lambda} [\kappa(u)_i + \frac{\lambda (x_i^T \hat{x}_j)^2}{1+\lambda (\kappa(u)_j - 1)}]$ can be computed explicitly.
\end{enumerate}

\section{Coordinate Descent Algorithm}
\subsection{``Algorithmic'' Coordinate-wise Smoothness}
In this paper we propose a gradient descent type algorithm (i.e., the Coordinate Descent algorithm) to address the MVEE problem. We mention that the classical algorithms on MVEE are Frank - Wolfe type (LOO) algorithms whose iteration direction is not the gradient descent direction. Although the algorithm proposed in this paper has many internal connection with the Frank - Wolfe type algorithm, its iteration direction is exactly the gradient descent direction. The comparison of our algorithm and the Frank - Wolfe type algorithm (exemplified as the WA algorithm) is elaborated in the 5th section.

First, several relevant definitions needed in the subsequent text, such as smoothness, strong convexity, and coordinate-wise smoothness are given as follows.

\begin{definition}
A continuously differentiable function $f(x)$ is said to be $L$-smooth on $\mathbb{R}^n$ if there exists a constant $L > 0 $ such that for any $x,y \in \mathbb{R}^n$ we have $\| \nabla f(x) - \nabla f(y) \| \leq L \|x-y\|$. Here $L$ is called the smoothness parameter.
\end{definition}

\begin{definition}
A continuously differentiable function $f(x)$ is said to be $\mu$-strongly convex on $\mathbb{R}^n$ if there exists a constant $\mu > 0 $ such that for any $x,y \in \mathbb{R}^n$ we have $\| \nabla f(x) - \nabla f(y) \| \geq \mu \|x-y\|$. Here $\mu$ is called the convexity parameter.
\end{definition}

\begin{definition}
A continuously differentiable function $f(x)$ is said to be coordinate-wise smooth with smoothness parameter $L_i$ if there exists $L_i>0$ such that for any $x \in \mathbb{R}^n$, $\theta \in \mathbb{R}$, we have $|\nabla_i f(x+\theta e_i) - \nabla_i f(x)| \leq L_i |\theta|$, $i=1,\ldots,n$.
\end{definition}

As we know, the objective function of model($D_1$) or ($D_2$) is neither smooth nor strongly convex, as a result we here introduce a new kind of coordinate-wise smoothness called the ``algorithmic'' coordinate-wise smooth, and use this new ``smoothness'' to analyze convergence properties.

\begin{definition}
A continuously differentiable function $f(x)$ is said to be ``algorithmic'' coordinate-wise smooth w.r.t the algorithm $\mathcal{A}$, if there exists a coordinate descent algorithm $\mathcal{A}$, such that the two successive points of algorithm $\mathcal{A}$, namely $u_k$ and $u_{k+1}$,(suppose all the elements of $u_k$ and $u_{k+1}$ but the $i_k$th element are equal.) satisfies $|\nabla_{i_k} f(x+\theta e_{i_k}) - \nabla_{i_k} f(x)| \leq L_{i_k} |\theta|$ for some positive constant $L_{i_k}$.
\end{definition}

In the following, we discuss the properties of the objective function of model ($D_2$).

\begin{lemma}\quad
\label{lemma1}
Let $u$ be a feasible solution of model ($D_2$), $\kappa(u) = \nabla_{u} g(u)$, and $\kappa(u)_i$ is the $i$th coordinate of $\kappa(u)$, then:

$\forall \theta_i \geq 0$, $i=1,\ldots,m$, we have $ |\kappa(u+\theta_i e_i)_i - \kappa(u)_i| \leq (\kappa(u)_i)^2 |\theta_i|$. $\forall \theta_i$ satisfying $\frac{\kappa(u)_i - n}{\kappa(u)_i n} \leq \theta_i \leq 0$ with $\kappa(u)_i \leq n$, we have $|\kappa(u+\theta_i e_i)_i - \kappa(u)_i| \leq n \kappa(u)_i |\theta_i|$.
\end{lemma}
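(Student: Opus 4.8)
The plan is to reduce both inequalities to a single rank-one update and then read them off from an exact closed form for the perturbed gradient component. First I would record the gradient. Writing $M := XUX^T = \sum_j u_j x_j x_j^T$ and differentiating coordinatewise via the identity $\partial_{u_i}\ln\det M = x_i^T M^{-1} x_i$, one sees that the $i$th component is $\kappa(u)_i = x_i^T M^{-1} x_i$ (this is $x_i^T H(u) x_i$, the quantity measuring $x_i$ relative to the current ellipsoid, and it equals the $i$th partial derivative of $g$ up to sign, which the absolute values in the statement render irrelevant). The crucial observation is that perturbing only the $i$th coordinate, $u \mapsto u + \theta_i e_i$, replaces $M$ by $M + \theta_i x_i x_i^T$, a rank-one modification along the very vector $x_i$ that defines $\kappa(u)_i$.

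Next I would apply the Sherman--Morrison formula to $(M + \theta_i x_i x_i^T)^{-1}$ and contract on both sides with $x_i$. Setting $\kappa_i := x_i^T M^{-1} x_i$, the two resulting terms combine and collapse to the clean identity
\[
\kappa(u+\theta_i e_i)_i = \frac{\kappa_i}{1 + \theta_i \kappa_i},
\]
valid whenever $1 + \theta_i \kappa_i > 0$, i.e.\ whenever the updated matrix stays positive definite. Subtracting $\kappa_i$ then gives the exact difference
\[
\kappa(u+\theta_i e_i)_i - \kappa(u)_i = \frac{-\theta_i \kappa_i^2}{1 + \theta_i \kappa_i}, \qquad\text{hence}\qquad |\kappa(u+\theta_i e_i)_i - \kappa(u)_i| = \frac{|\theta_i|\,\kappa_i^2}{1 + \theta_i \kappa_i}.
\]
Both claimed bounds are now simply lower bounds on the denominator $1 + \theta_i\kappa_i$ over the two prescribed ranges of $\theta_i$.

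Finally I would dispatch the two cases. For $\theta_i \ge 0$ we have $1 + \theta_i \kappa_i \ge 1$ (since $\kappa_i \ge 0$), and dividing by a quantity $\ge 1$ yields $|\kappa(u+\theta_i e_i)_i - \kappa(u)_i| \le \kappa_i^2 |\theta_i|$, the first inequality. For the negative range $\frac{\kappa_i - n}{n\kappa_i} \le \theta_i \le 0$ with $\kappa_i \le n$, multiplying the left endpoint by $\kappa_i>0$ gives $\theta_i \kappa_i \ge \frac{\kappa_i}{n} - 1$, whence $1 + \theta_i \kappa_i \ge \frac{\kappa_i}{n} > 0$; substituting this denominator bound yields $|\kappa(u+\theta_i e_i)_i - \kappa(u)_i| \le n\kappa_i |\theta_i|$, the second inequality.

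The argument is essentially a clean calculation, so I expect no deep obstacle; the one point requiring care is the second case, where the admissible interval for $\theta_i$ must keep $1 + \theta_i\kappa_i$ bounded away from zero by exactly $\kappa_i/n$. The endpoint $\frac{\kappa_i-n}{n\kappa_i}$ is engineered for precisely this: plugging it into the closed form gives $1+\theta_i\kappa_i = \frac{\kappa_i}{n}$, so $\kappa(u+\theta_i e_i)_i = n$, i.e.\ it is the coordinate shift that drives $x_i$ exactly onto the ellipsoid boundary. I would also note in passing that $1+\theta_i\kappa_i>0$ holds throughout both ranges, so the rank-one update stays positive definite and the closed form is legitimate.
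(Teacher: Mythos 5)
Your proposal is correct and follows essentially the same route as the paper: both apply the Sherman--Morrison rank-one update formula to $(XUX^T + \theta_i x_i x_i^T)^{-1}$, obtain the exact difference $\frac{-\theta_i\kappa_i^2}{1+\theta_i\kappa_i}$, and then lower-bound the denominator by $1$ for $\theta_i\geq 0$ and by $\kappa_i/n$ on the prescribed negative range. Your explicit closed form $\kappa(u+\theta_i e_i)_i = \kappa_i/(1+\theta_i\kappa_i)$ and the remark on positive definiteness are just a slightly more polished packaging of the identical computation.
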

\begin{proof}\quad
For $\theta_i \geq 0$, by the definition of $\nabla g(u)$, and the Sherrman-Morrison-Woodbury formula, we have
\[
\begin{array}{cl}
|\kappa(u+\theta_i e_i)_i - \kappa(u)_i| & = |x_i^T (XUX^T + \theta_i x_i x_i^T)^{-1} x_i - x_i^T (XUX^T)^{-1} x_i |\\
\ & = |x_i^T (\frac{-\theta_i (XUX^T)^{-1} x_i x_i^T (XUX^T)^{-1}}{1+\theta_i \kappa(u)_i}) x_i| \\
\ & = \frac{\theta_i (\kappa(u)_i)^2}{1+\theta_i \kappa(u)_i} \\
\ & \leq (\kappa(u)_i)^2 |\theta_i|. \\
\end{array}
\]
Hence, for $\frac{\kappa(u)_i - n}{\kappa(u)_i n} \leq \theta_i \leq 0$ with $\kappa(u)_i \leq n$, we have $\frac{\kappa(u)_i^2}{1+\kappa(u)_i \theta_i} \leq \kappa(u)_i n$, and it follows that
\[
\begin{array}{cl}
|\kappa(u+\theta_i e_i)_i - \kappa(u)_i| & = |x_i^T (XUX^T + \theta_i x_i x_i^T)^{-1} x_i - x_i^T (XUX^T)^{-1} x_i|\\
\ & = |x_i^T (\frac{-\theta_i (XUX^T)^{-1}x_i x_i^T (XUX^T)^{-1}}{1+\theta_i \kappa(u)_i})x_i|\\
\ & = -\frac{\theta_i \kappa(u)_i^2}{1+\theta_i \kappa(u)_i}\\
\ & \leq n \kappa(u)_i |\theta_i|.\\
\end{array}
\]
\end{proof}

Lemma 1 presents a kind of coordinate-wise property of the objective function $g(u)$, which is weaker than the coordinate-wise smoothness, since the Lipschitz constant of smoothness $L_i, i=1,\ldots,m$ is relevant to $\kappa(u)_i$. That is $L_i = (\kappa(u)_i)^2$ for $\theta \geq 0$ and $L_i=n(\kappa(u)_i)$ for $ \frac{\kappa(u)_i - n}{\kappa(u)_i n} \leq \theta <0$. We note that if the iteration points generated by some algorithm satisfies the condition in Lemma 1, then the objective function $g(u)$ of ($D_1$) is ``algorithmic'' coordinate-wise smooth, and so does the objective function $h(u)$ of ($D_2$). We emphasize here that the objective function $g(u)$ for ($D_1$) or $h(u)$ for ($D_2$) is exactly not coordinate-wise smooth, since the inequality in lemma 1 may not guaranteed in each feasible point $u$. The ``algorithmic'' coordinate-wise smoothness is rather weak, but it is enough to guarantee the convergence of our proposed algorithm.

According to \cite{Nesterov2011}, the coordinate-wise smoothness can be extended to the smoothness. Since if $g$ is twice differentiable, the coordinate smoothness is equivalent to diagonal elements of the Hessian are bounded above, so that the maximum eigenvalue of the Hessian is bounded above by its trace, and the smoothness is obtained. In comparison, we note that the strong convexity cannot be obtained by coordinate-wise strongly convex. Since the coordinate strong convexity only means the diagonal elements are bounded below which cannot guarantee the minimum eigenvalue of the Hessian is greater than 0. Lemma 1 also reveals the possibility of introducing the FOO algorithm to compute the MVEE problem. However, this possibility does not mean that the FOO algorithm can be applied directly to solve model ($D_2$).

The reason is that we cannot always guarantee the positive definiteness of the matrix $XUX^T$. Note that $XUX^T$ is at least a positive semi-definite matrix, and $\forall 0 \neq y \in \mathbb{R}^n$ such that $y^T (XUX^T) y = \sum\limits_{\{i | u_i>0\}} u_i \| y^T x_i\|^2=0$, we have $y \perp x_i, \forall i \in I=\{i | u_i >0 \}$. It follows that if $aff\{x_i|i \in I\}=\mathbb{R}^n$, then such $y$ dose not exist, and $XUX^T$ is always positive definite.

As for the FW type algorithms, no matter we choose the Khachiyan initialization scheme ($u_0=\{\frac{1}{m},\ldots, \frac{1}{m}\}$), or the Kumar-Yildirim initialization scheme (non-zero element in $u_0$ represents $n$'s linearly independent directions), the initialization of $u_0$ satisfies the positive definiteness of $XU_0X^T$. Moreover, since the new iteration point $u_{k+1}$ is the convex combination of the current iteration point $u_k$ and $e_j$, where $j$ is the selected coordinate, the number of nonzero elements in $u_{k+1}$ will not decrease with respect to $u_{k}$, $\forall k$. As a result, $XU_kX^T$ is positive definite, $\forall k$. However, for the FOO algorithm, e.g., the gradient  projection algorithm, the projection to the probability simplex is equivalent to finding one $\lambda$ such that $\sum\limits_{j=1}^m \max\{u_j - \lambda, 0\} =1$(see \cite{Boyd2004}), and each element of the new iteration point $u_{k+1}$ is $u_{k+1}^j = \max\{u_{k}^j -\lambda,0\}$, $j=1,\ldots,m$. It follows that the number of nonzero elements in $u_{k+1}$ might be less than $n$. As a result, simply using the gradient projection method does not guarantee the positive definiteness of $XU_kX^T$.

\subsection{Coordinate Descent Algorithm with ``Constant'' Stepsize}
In this section, we  propose the Coordinate Descent (CD) algorithm to solve model ($D_1$) instead of model ($D_2$) so as to compute the MVEE. Denoting $h(u) = g(u) + n (1 - e^T u)$ as the objective function of model ($D_1$), the CD algorithm is:

\begin{algorithm}[H]
\caption{Coordinate Descent algorithm for model ($D_1$)}\label{3rdAlgorithm}
\begin{algorithmic}[1]
\STATE {Input:$X=\{x_1, \ldots, x_m\}$, $\epsilon > 0$}
\STATE {Initialization: Using Khachiyan's or Kumar - Yildirim's initialize scheme to initialize $u_0$}
\WHILE {not converged}
\STATE {Choose $j=\arg \max_{i=1,\ldots,m} {|\nabla h(u_k)_i|}$}
\STATE {$u_{k+1}=P_{u \geq 0}(u_k - \frac{1}{L_j}\nabla h(u_k)_j )$}
\ENDWHILE
\STATE {Output: $u^{*}=u_k$}
\end{algorithmic}
\end{algorithm}

There are several points needed to be explained with regard to the CD algorithm.
\begin{enumerate}
\item The CD algorithm above is actually the gradient type coordinate descent algorithm with the Gauss - Southwell's Rule to choose the coordinate axis.
  Notice that the CD algorithm aims to compute model ($D_1$) whose objective function is $h(u)$ with the gradient $\nabla h(u) = ne - \kappa(u)$ satisfying $| \nabla_i h(u+\theta e_i) - \nabla_i h(u) |=|\kappa(u+\theta e_i)_i -\kappa(u)_i| \leq L_i |\theta|$. According to lemma 1, in the first case, if the $j$th coordinate-axis with $n \leq \kappa(u)_j$ is chosen, the Lipchitiz constant $L_j$ is set to be $(\kappa(u)_j)^2$, and the iteration can be written as (suppose $u$ and $u^{\dagger}$ denote two successive iteration points):
  \[
  \begin{array}{cl}
  u^{\dagger} & = P_{u\geq 0}(u-\frac{1}{L_j} \nabla h(u)_je_j)\\
  \ & = u - \frac{1}{(\kappa(u)_j)^2} (n - \kappa(u)_j)e_j\\
  \ & = u + \frac{\kappa(u)_j-n}{(\kappa(u)_j)^2}e_j\\
  \end{array}
  \]
 In the second case, if the $j$th coordinate-axis with $n \geq \kappa(u)_j$ is chosen, the Lipchitz constant $L_j$ is set to be $n\kappa(u)_j$, and the iteration can be written as:
  \[
  \begin{array}{cl}
  u^{\dagger} & = P_{u\geq 0}(u-\frac{1}{L_j} \nabla h(u)_je_j)\\
  \ & = P_{u\geq 0}({u - \frac{1}{(n\kappa(u)_j)} (n - \kappa(u)_j)e_j})\\
  \ & = \left\{\begin{array}{cl}
  u + \frac{\kappa(u)_j-n}{(n\kappa(u)_j)}e_j & \textrm{if} \ u_j \geq \frac{n-\kappa(u)_j}{(n\kappa(u)_j)}\\
  0 & \textrm{if} \  u_j < \frac{n-\kappa(u)_j}{(n\kappa(u)_j)}\\
  \end{array}\right.\\
  \end{array}
  \]
  It is not difficult to see that the first case above is equal to the ``add'' and ``increase'' steps of the WA algorithm, while the second case corresponds to the ``minus'' and ``drop'' steps of the WA algorithm. Therefore, we can similarly classify the iteration steps of the CD algorithm into the ``add'', ``increase'', ``decrease'' and ``drop'' types. Moreover, Notice that $ n -\kappa(u)_i$ is exactly the $i$th element of the gradient of the objective function of model ($D_1$). Also recall that the coordinate selection rule in the WA algorithm is that: when $\kappa_{+} - n \geq n - \kappa_{-}$, the $j+$th coordinate axis is selected, otherwise the $j-$th coordinate axis is selected. {\bf Therefore, the iterative direction rule of the WA algorithm (the ``ordinary step'' and the ``away step'' of the Frank - Wolfe algorithm) is $\max_{i=1,\ldots,m} \{|\nabla_i h(u)|\}$, which is exactly the same as the ``Gauss - Southwell's Rule''. It is noted that if we resort to problem (D2) instead of (D1), the ``Gauss - Southwell's Rule'' is hide}. Moreover, we see clearly that the objective function $h(u)$ is ``algorithmic coordinate-wise smooth'' w.r.t the CD algorithm.
\item In the ``add'' and ``increase'' iteration step, since $\kappa(u^{\dagger})_{j+} - \kappa(u)_{j+} = \frac{(n-\kappa_{+})\kappa_{+}}{2\kappa_{+}-n} <0$, we can only guarantee that $x_{j+}^T (XU^{\dagger}X^T) x_{j+} \leq  x_{j+}^T (XUX^T) x_{j+}$, while we can not guarantee that the value of $x_{j+}^T (XU^{\dagger}X^T) x_{j+}$ can be decreased to $n$. However, if the step size is set as $\lambda =\frac{n-\kappa_{+}}{n \kappa_{+}}$, $x_{j+}^T (XU^{\dagger}X^T) x_{j+}$ can be reduced to $n$. Unfortunately, since $\kappa_{+} > n$, ${n \kappa_{+}}$ is not the smoothness parameter of the $j+$th coordinate.
\end{enumerate}
\subsection{New Explanation of the Frank - Wolfe Type algorithms}
Based on the analysis above, we can also explain the FW-K algorithm from a new perspective. \cite{Kumar2005} explained the FW-K algorithm as follows: each iteration step of the FW-K algorithm is improving toward the direction of the point with the largest ellipsoidal norm ($\|\cdot\|_H$, where $H=XU_kX^T$). \cite{Sun2004} treated the ellipsoidal center as the mean value of the point set $X$, and the $\|\cdot\|_H$ distance of a point in the ellipsoid to the ellipsoidal center as the variance of the point. Thus, each iteration step of the FW-K algorithm is improving towards the direction of the point with the largest variance. Lemma1 shows that the objective function $g$ is $L_i=\kappa(u)_i^2$  coordinate-wise smooth when $\theta_i \geq 0$, $i=1,\ldots,m$. {\bf Notice that in each iteration, the FW-K algorithm chooses the coordinate axis $i$ with the largest value of $\kappa(u)^i$, which corresponds to the coordinate axis with the worst smoothness. This coordinate selection scheme is similar to that proposed by \cite{Nesterov2011}, that is, the worse smoothness the coordinate axis is, the more likely it is to be selected in the next iteration. Numerous numerical tests have shown that the ``Gauss-Southwell's Rule'' is faster than other coordinate axis selection rules \cite{Nutini2015}, and it provides another explanation of why the WA algorithm is much faster than the FW-K algorithm on the MVEE problem.}
\subsection{Comparison of the CD and WA algorithm}
We compare the WA and CD algorithm in the following table. In table 2, the CD and WA algorithm both use the ``Gauss - Southwell's Rule'' to choose the coordinate axis. In \cite{Nutini2015}, the ``Gauss - Southwell's Rule'' is an efficient rule of choosing coordinate axis for coordinate descent methods. The CD algorithm chooses the $j$th element of the negative gradient as the iterative direction, while the WA algorithm chooses $u - e_{j+}$ or $e_{j-} - u$ as the iterative direction. Both of these two directions are descent directions for the objective function. The WA uses the accurate stepsize at each iteration, because the accurate stepsize can be computed in closed from. The CD algorithm uses the ``constant'' stepsize which is simple to compute and can lead to sufficient decrement of the objective function at each iteration. It is noted that since the Lipschitz constant $L_i$ is changing w.r.t each iteration, this kind of ``constant stepsize'' is actually ``dynamic stepsize''. Finally, we note that in the ``black-box'' model, the WA and CD algorithm are two different types of algorithms.

\begin{table}\label{tab2}
\centering
{\footnotesize \begin{tabular}{|c|c|c|}
\hline
Algorithm & WA & CD\\
\hline
Rules to Choose Coordinate Axis & Gauss - Southwell's Rule & Gauss - Southwell's Rule\\
\hline
Direction & $u - e_j$ & $-\kappa(u)_j e_j$\\
\hline
Stepsize & $\arg \min_{\lambda} \{- \ln \det (XU(\lambda)X^T)\}$ & $\frac{1}{L_j}$\\
\hline
Oracle & LOO & FOO\\
\hline
\end{tabular}}
\caption{Comparison of the WA algorithm and CD algorithm.}
\end{table}

\section{Convergence Analysis of the algorithm}
\subsection{Convergence Properties of the CD algorithm}
In this section we discuss the convergence properties of the CD algorithm. We first prove that the CD algorithm is sublinearly convergent, and then we discuss on the ``globally (locally)'' linear convergence properties of the WA algorithm mentioned in \cite{Todd2016}.
\begin{lemma}\quad
\label{lemma2}
Let $u^{\dagger}$ be the next iteration point of $u$ generated by the CD algorithm, then:
\[
h(u) - h(u^{\dagger}) \geq \left\{ \begin{array}{ll}
\frac{(n-\kappa_{+})^2}{2 \kappa_{+}^2}, & \textrm{if} \ j+ \ \textrm{is chosen} \\
\frac{(n-\kappa_{-})^2}{2 n \kappa_{-}}, & \textrm{if} \ j- \ \textrm{is chosen and} \; \frac{n - \kappa_{-}}{\kappa_{-} n} < u_{j-}\\
0, & \textrm{otherwise}. \\
\end{array}
\right.
\]
\end{lemma}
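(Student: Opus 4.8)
The plan is to upgrade the coordinate-wise Lipschitz estimates of Lemma \ref{lemma1} into a quadratic upper bound on $h$ along the selected coordinate, and then to combine that bound with the defining inequality of the Euclidean projection onto $\{u\ge 0\}$ to obtain the standard ``sufficient decrease'' guarantee of a projected (coordinate) gradient step. First I would fix the chosen coordinate $j$ and the scalar displacement $\theta=u^{\dagger}_j-u_j$, and integrate the estimate $|\nabla_j h(u+te_j)-\nabla_j h(u)|\le L_j|t|$ from $0$ to $\theta$ to produce
\[
h(u+\theta e_j)\ \le\ h(u)+\nabla_j h(u)\,\theta+\tfrac{L_j}{2}\theta^2,
\]
valid for every $\theta$ in the range where Lemma \ref{lemma1} applies, with $L_j=\kappa_j^2$ when $\theta\ge 0$ and $L_j=n\kappa_j$ when $\frac{\kappa_j-n}{n\kappa_j}\le\theta\le 0$. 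Recall that $\nabla h(u)=ne-\kappa(u)$, so $\nabla_j h(u)=n-\kappa_j$, which is what feeds into the bound.

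Next I would invoke the variational inequality characterizing $u^{\dagger}=P_{u\ge 0}\big(u-\frac{1}{L_j}\nabla h(u)_j e_j\big)$: since only coordinate $j$ changes, testing the projection against the feasible point $u$ gives $\langle\nabla h(u),u^{\dagger}-u\rangle\le -L_j\|u^{\dagger}-u\|^2$. Substituting this into the quadratic bound collapses everything to the one-line estimate
\[
h(u)-h(u^{\dagger})\ \ge\ \tfrac{L_j}{2}\,\|u^{\dagger}-u\|^2,
\]
after which the proof is purely a matter of reading off $\|u^{\dagger}-u\|^2$ and $L_j$ from the closed forms of the iteration already derived in Section 3.2. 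When $j+$ is chosen, $\kappa_+>n$ forces $\theta=\frac{\kappa_+-n}{\kappa_+^2}>0$, so $L_{j+}=\kappa_+^2$ and $\|u^{\dagger}-u\|^2=\big(\frac{\kappa_+-n}{\kappa_+^2}\big)^2$, giving $\frac{(n-\kappa_+)^2}{2\kappa_+^2}$. When $j-$ is chosen, $\kappa_-<n$ and $L_{j-}=n\kappa_-$; if in addition $\frac{n-\kappa_-}{n\kappa_-}<u_{j-}$ the projection is inactive, $\theta=\frac{\kappa_--n}{n\kappa_-}$, and the same substitution yields $\frac{(n-\kappa_-)^2}{2n\kappa_-}$. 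Otherwise the step is clipped to $u^{\dagger}_{j-}=0$, i.e.\ $\theta=-u_{j-}$, and $\frac{L_{j-}}{2}\|u^{\dagger}-u\|^2=\frac{n\kappa_-}{2}u_{j-}^2\ge 0$ supplies the trivial third bound.

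The one genuinely delicate point, and the step I expect to require the most care, is \emph{admissibility}: the quadratic bound is valid only for displacements lying in the narrow interval where Lemma \ref{lemma1} holds, so each of the three step sizes must be verified to fall inside it. The sign conditions $\kappa_+>n$ and $\kappa_-<n$ are not extra assumptions but follow from the case labels themselves, since whether $j+$ or $j-$ is selected is governed by the sign of $\nabla_j h(u)=n-\kappa_j$. The subtler checks are that the interior $j-$ step lands \emph{exactly} at the left endpoint $\frac{\kappa_--n}{n\kappa_-}$ of the admissible interval (so that $L_{j-}=n\kappa_-$ is still legitimate), and that in the drop case the threshold $u_{j-}<\frac{n-\kappa_-}{n\kappa_-}$ in the statement is precisely what places $\theta=-u_{j-}$ strictly inside $\big(\frac{\kappa_--n}{n\kappa_-},0\big)$. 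Once these inclusions are confirmed, the three estimates are immediate from the displayed sufficient-decrease inequality.
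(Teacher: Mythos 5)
Your proposal is correct and follows essentially the same route as the paper's own proof: the coordinate-wise smoothness estimate of Lemma \ref{lemma1} is turned into the quadratic (descent-lemma) bound, the projection's variational inequality (tested at the feasible point $u$) gives $\langle\nabla h(u),u^{\dagger}-u\rangle\le -L_j\|u^{\dagger}-u\|^2$, and combining the two yields the sufficient decrease $h(u)-h(u^{\dagger})\ge\frac{L_j}{2}\|u^{\dagger}-u\|^2$, from which the three cases are read off exactly as the paper does. Your explicit admissibility checks (that each step length lies in the interval where Lemma \ref{lemma1} applies) are a welcome addition that the paper handles only implicitly via its notion of ``algorithmic'' coordinate-wise smoothness, but they do not constitute a different argument.
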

\begin{proof}\quad
First we notice that with respect to the two successive points generated by the CD algorithm, the objective function $h(u)$ is ``algorithmic'' coordinate-wise smooth w.r.t the CD algorithm. More specifically, it is $\kappa(u)_{j+}^2$ coordinate-wise smooth when coordinate $j+$ is selected, and $n \kappa(u)_{j-}$ coordinate-wise smooth when coordinate $j-$ is selected. By the above relation and the fact that $\nabla_i h(u) = n - \kappa(u)_i$, we have $u^{\dagger} = u - \frac{n-\kappa_{+}}{\kappa_{+}^2} e_{j+}$ if the current iteration chooses the $j+$th coordinate. Using the smoothness inequality (\cite{Nesterov2004}), we obtain
\[
\begin{array}{cl}
h(u^{\dagger}) & \leq h(u) -(n-\kappa_{+}) \frac{n-\kappa_{+}}{\kappa_{+}^2} + \frac{\kappa_{+}^2}{2} \frac{(n-\kappa_{+})^2}{\kappa_{+}^4},\\
\ & = h(u) - \frac{(n-\kappa_{+})^2}{2(\kappa_{+})^2}.\\
\end{array}
\]
If the current iteration chooses the $j-$th coordinate, we have $u^{\dagger} = P_{\{u| u \geq 0\}}(u - \frac{n - \kappa_{-}}{\kappa_{-} n} e_{j-})$, and from the famous projection theorem (\cite{Bertsekas2015}), we have
\[
(u - \frac{n-\kappa_{-}}{\kappa_{-}n} e_{j-} - u^{\dagger})^T (u-u^{\dagger}) \leq 0,
\]
i.e., $(n-\kappa_{-})e_{j-}^T (u^{\dagger} - u) \leq -(\kappa_{-}  n) \|u-u^{\dagger}\|^2$. It follows that
\[
\begin{array}{cl}
h(u^{\dagger}) & \leq h(u) - (n \kappa_{-}) \|u-u^{\dagger}\|^2 + \frac{(n\kappa_{-})}{2} \|u-u^{\dagger}\|^2 \\
\ & = h(u) - \frac{n \kappa_{-}}{2} \|u-u^{\dagger}\|^2. \\
\end{array}
\]
Here, in case $\frac{n - \kappa_{-}}{\kappa_{-} n} < u_{j-}$, we have $\|u-u^{\dagger}\|^2 = \frac{1}{(n \kappa_{-})^2} (n-\kappa_{-})^2$, and therefore:
\[
\begin{array}{cl}
h(u^{\dagger}) & \leq h(u) - \frac{n \kappa_{-}}{2} \frac{1}{(n \kappa_{-})^2} (n-\kappa_{-})^2 \\
\ & \leq h(u) - \frac{(n-\kappa_{-})^2}{2(n \kappa_{-})}. \\
\end{array}
\]
In case $\frac{n - \kappa_{-}}{\kappa_{-} n} \geq u_{j-}$, since the value of $\|u-u^{\dagger}\|^2$ might be arbitrarily small, it can only be guaranteed that:
\[
h(u^{\dagger}) \leq h(u).
\]
\end{proof}

Lemma \ref{lemma2} shows that the CD algorithm generates a sequence of relaxation points (\cite{Nesterov2004}) which provides a good condition for the convergence analysis. Indeed, let $u^{\dagger}=u+\lambda e_i$, then $h(u)-h(u^{\dagger}) = \ln (1+\lambda \kappa(u)_i) - n\lambda$.
Therefore, if we choose the $j+$th coordinate in the current iteration, i.e., $i=j+$, $\lambda=\frac{\kappa_{+}-n}{\kappa_{+}^2}$, then:
\[
h(u)-h(u^{\dagger}) = \ln(2-\frac{n}{\kappa_{+}}) - \frac{n(\kappa_{+} - n)}{\kappa_{+}^2},
\]
which can be proved to be greater than $\frac{(n-\kappa_{+})^2}{2\kappa_{+}^2}$.

If we choose the $j-$th coordinate in the current iteration and $\frac{\kappa_{-}-n}{n\kappa_{-}} \geq - u_{j-}$, i.e., $i=j-$, $\lambda=\frac{\kappa_{-}-n}{n\kappa_{-}}$, then:
\[
h(u)-h(u^{\dagger}) = \ln(\frac{\kappa_{-}}{n}) + \frac{n}{\kappa_{-}} - 1,
\]
which can be proved to be greater than $\frac{(n-\kappa_{-})^2}{2n\kappa_{-}}$.

If we choose the $(j-)$th coordinate in the current iteration and $\frac{\kappa_{-}-n}{n\kappa_{-}} < - u_{j-}$, i.e., $i=j-$, $\lambda=-u_{j-}$, then :
\[
h(u)-h(u^{\dagger}) = n u_{j-} + \ln (1-u_{j-} \kappa_{-}),
\]
which can be proved to be greater than $0$(see Appendix).

Next we shall show the coercivity of $h(u)$.
\begin{lemma}\quad\label{lemma3}
The objective function $h(u)$ of model ($D_1$) is coercive over the set $\{ u | u \geq 0\}$.
\end{lemma}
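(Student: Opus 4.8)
The plan is to show directly that $h(u) \to +\infty$ whenever $u \geq 0$ and $\|u\| \to \infty$; together with the lower semicontinuity of $h$ on $\{u \geq 0\}$ this is exactly coercivity (equivalently, every sublevel set $\{u \geq 0 : h(u) \leq \alpha\}$ is bounded, hence compact). Recall that $h(u) = g(u) + n(e^T u - 1)$ with $g(u) = -\ln\det(XUX^T)$, and that $e^T u = \|u\|_1$ on the nonnegative orthant. The entire difficulty is to control $g(u)$, which may tend to $-\infty$, and to show that the linear penalty $n\|u\|_1$ dominates it.

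First I would lower-bound $g(u)$ by a quantity growing at most logarithmically in $\|u\|$. Since $XUX^T = \sum_{i=1}^m u_i x_i x_i^T$ is positive semidefinite, its eigenvalues $\lambda_1,\dots,\lambda_n \geq 0$ satisfy the arithmetic--geometric mean inequality $\det(XUX^T) = \prod_{j=1}^n \lambda_j \leq \left(\frac{1}{n}\sum_{j=1}^n \lambda_j\right)^{\!n} = \left(\frac{\mathrm{tr}(XUX^T)}{n}\right)^{\!n}$. Taking $-\ln$ of both sides gives $g(u) \geq -n\ln\!\left(\frac{\mathrm{tr}(XUX^T)}{n}\right)$, an inequality valid in every case, including the degenerate directions where $XUX^T$ is singular (there $g(u)=+\infty$ and the bound is trivial).

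Next I would bound the trace. With $M = \max_{1\leq i\leq m} \|x_i\|^2$ we have $\mathrm{tr}(XUX^T) = \sum_{i=1}^m u_i \|x_i\|^2 \leq M \sum_{i=1}^m u_i = M\|u\|_1$. Substituting and using $e^T u = \|u\|_1$ yields $h(u) \geq n\|u\|_1 - n\ln\|u\|_1 + n\ln n - n\ln M - n$. Because the scalar map $t \mapsto nt - n\ln t$ tends to $+\infty$ as $t\to+\infty$, and $\|u\|_1\to\infty$ is equivalent to $\|u\|\to\infty$ by equivalence of norms on $\mathbb{R}^m$, the right-hand side tends to $+\infty$; this establishes coercivity.

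The one point deserving care---the crux of the argument---is that the estimate must be uniform over all the ways $\|u\|$ can blow up, notably the degenerate directions in which only a subset of the $u_i$ grows and $XUX^T$ fails to remain uniformly positive definite. The argument survives precisely because the AM--GM step routes everything through the trace and never invokes a lower bound on the eigenvalues; it therefore delivers the same clean estimate whether $XUX^T$ stays well conditioned, becomes ill conditioned, or degenerates to a singular matrix (where $g=+\infty$ can only help). Beyond correctly tracking the additive constants, I anticipate no genuine obstacle.
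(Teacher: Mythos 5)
Your proof is correct, and it takes a genuinely different route from the paper's. The paper argues along coordinate rays: writing $u' = u + t e_i$ for a fixed base point $u$ and using the rank-one determinant identity $\det(XUX^T + t\, x_i x_i^T) = \det(XUX^T)\,(1 + t\,\kappa(u)_i)$, it observes that $h(u') = -\ln\det(XUX^T) - \ln(1+t\,\kappa(u)_i) + n(e^Tu - 1) + nt \to \infty$ as $t \to \infty$, since the linear term $nt$ dominates the logarithm. Your argument instead produces a single global estimate: by AM--GM, $\det(XUX^T) \leq \bigl(\mathrm{tr}(XUX^T)/n\bigr)^n$ and $\mathrm{tr}(XUX^T) = \sum_i u_i\|x_i\|^2 \leq M\|u\|_1$, whence $h(u) \geq n\|u\|_1 - n\ln\|u\|_1 + n\ln n - n\ln M - n$ uniformly over the nonnegative orthant. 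What your version buys is completeness and uniformity: coercivity concerns arbitrary sequences with $\|u\|\to\infty$, in which several coordinates may grow simultaneously and the ``base point'' changes along the sequence, so the paper's ray computation (whose constants $-\ln\det(XUX^T)$ and $\kappa(u)_i$ depend on the fixed base $u$) does not literally cover the general case without extra work, whereas your explicit lower bound does, and it also handles the singular directions ($\det(XUX^T)=0$, where $g=+\infty$) with no case analysis. What the paper's version buys is economy and consistency with its toolbox --- the same rank-one update formula used throughout the algorithmic analysis --- at the cost of rigor. One cosmetic caveat in yours: you should note $M = \max_i \|x_i\|^2 > 0$ (guaranteed since the points affinely span $\mathbb{R}^n$ in the MVEE setting, or after the shift making $\|x_i\| \geq 1$ used in Theorem 1) so that $\ln M$ is finite, and restrict to $\|u\|_1 \geq 1$, say, so the logarithm is harmless; neither point is a real obstacle.
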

\begin{proof}\quad
By the definition of coercivity, to show that $h(u) \rightarrow \infty$ when $u \geq 0$ and $\|u\| \rightarrow \infty$ is enough. It is easy to see that $\|u\| \rightarrow \infty$ if and only if at least one coordinate $u_i$ of $u$ satisfies that $u_i \rightarrow \infty$. As $h(u^{'}) = - \ln \det (XUX^T) - \ln (1+ t \kappa(u)_i) + n(e^T u - 1) + nt$, where $u^{'}=u+t\cdot e_i$, we know $h(u^{'}) \rightarrow \infty$ when $t \rightarrow \infty$. This completes the proof.
\end{proof}

From Lemma 3, we know that although the feasible domain of the model ($D_1$) is unbounded, we can restrict the attention into a compact set $dom(h) \cap \{u | u \geq 0\}$.

Combining the Lemmas 2 and 3, we can prove the convergence of the sequence generated by the CD algorithm.

\begin{theorem}\quad
The sequence $\{u_k\}_{k=1}^{\infty}$ generated by the CD algorithm converges to the optimal solution of problem $(D1)$
\end{theorem}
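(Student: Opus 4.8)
The plan is to combine the sufficient–decrease estimate of Lemma~\ref{lemma2} with the coercivity of Lemma~\ref{lemma3} in the classical three–step pattern for descent methods: first the objective values converge, then the iterates stay in a compact set and hence have accumulation points, and finally every accumulation point satisfies the KKT conditions of $(D_1)$ and is therefore optimal. Uniqueness of the optimum then upgrades subsequential convergence to convergence of the whole sequence.

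First I would record that Lemma~\ref{lemma2} makes $\{h(u_k)\}$ nonincreasing; since $(D_1)$ has a finite optimal value (strong duality with $(P)$), this sequence is bounded below and thus converges, so the per–step decrements $h(u_k)-h(u_{k+1})$ tend to $0$. Because $h(u_k)\le h(u_0)$ for all $k$, Lemma~\ref{lemma3} confines the iterates to the sublevel set $S=\{u\ge 0: h(u)\le h(u_0)\}$, which is compact. On $S$ the finiteness of $h$ forces $\det(XUX^T)>0$, so $XU_kX^T$ stays positive definite with eigenvalues bounded away from $0$; hence $\kappa(\cdot)=\nabla g$ is continuous and uniformly bounded on $S$, say $\kappa(u)_i\le K$. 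Boundedness of $\{u_k\}$ then yields at least one accumulation point.

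The heart of the argument is to show that any accumulation point $\bar u=\lim_j u_{k_j}$ is optimal, i.e. that the Gauss--Southwell optimality gap $\epsilon(\bar u)=\max\{\max_i(\kappa(\bar u)_i/n-1),\ \max_{i:\bar u_i>0}(1-\kappa(\bar u)_i/n)\}$ vanishes. I would argue by contradiction: if $\epsilon(\bar u)>0$ then, using continuity of $\kappa$ and the fact that coordinates with $\bar u_i>0$ stay positive along the subsequence, the algorithmic gap $\epsilon_{k_j}$ is bounded below by a positive constant. On the ``add/increase'' ($j+$) steps and the good ``decrease'' ($j-$) steps, the decrement bounds of Lemma~\ref{lemma2} together with $\kappa\le K$ give a decrease bounded below by a positive constant, contradicting the fact that the decrements tend to $0$. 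Hence $\epsilon(\bar u)=0$, so $\bar u$ satisfies primal feasibility and complementary slackness and is optimal for $(D_1)$.

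The main obstacle is the ``otherwise'' (drop) case of Lemma~\ref{lemma2}, where only $h(u^{\dagger})\le h(u)$ is guaranteed, so no bounded-below decrement is available and the contradiction above can stall. I would resolve this structurally: a drop step sets a currently positive coordinate to $0$ and therefore strictly decreases the cardinality of the support, which is an integer that cannot fall below the level needed to keep $XU_kX^T$ nonsingular (guaranteed since $h(u_k)<\infty$), while only ``add'' steps can enlarge the support. Consequently drop steps cannot be taken indefinitely without intervening support–enlarging steps, so infinitely many productive steps occur, and it is along these that the decrement estimate forces $\epsilon_k\to 0$ and pins down the limit points. Finally, invoking the strict convexity of the objective on the feasible set (Section 2.3), the optimal solution is unique; since every accumulation point of the bounded sequence equals this unique $u^{*}$, the entire sequence $\{u_k\}$ converges to $u^{*}$. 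I expect the careful bookkeeping of the drop steps---ensuring they do not prevent the optimality gap from closing along the sequence---to be the most delicate part of the proof.
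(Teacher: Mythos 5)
Your high-level architecture (sufficient decrease via Lemma \ref{lemma2}, compactness via Lemma \ref{lemma3}, optimality of accumulation points, then uniqueness to upgrade to convergence of the whole sequence) is close in spirit to the paper's proof, but your mechanism for identifying limit points is different: the paper telescopes the decrease to obtain $\sum_k \|u_{k+1}-u_k\|^2 < \infty$ and then passes to the limit in the variational inequality characterizing the projection step, whereas you argue by contradiction through (semi)continuity of the Gauss--Southwell gap. That part of your argument is sound: along a subsequence $u_{k_j}\to\bar u$ with positive gap, any ``add/increase'' or interior ``decrease'' step would produce a decrement bounded away from zero, which is incompatible with the decrements tending to $0$.

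The genuine gap is in the drop-step bookkeeping, exactly where you anticipated trouble. Your contradiction argument forces every step taken at the iterations $k_j$ (for large $j$) to be a drop step; your structural remedy (drops shrink the support, only adds enlarge it, hence infinitely many productive steps occur) then yields optimality only for limit points of the \emph{productive} subsequence. It does not exclude the one scenario you must exclude: a subsequence of \emph{drop-step} iterates accumulating at a non-optimal $\bar u$. The support-counting argument cannot be applied to the iterations $k_j$ themselves, because they are not consecutive, and add steps occurring between $k_j$ and $k_{j+1}$ can replenish the support indefinitely; consequently your concluding claim that ``every accumulation point of the bounded sequence equals this unique $u^{*}$'' is precisely the statement that remains unproved. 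To close the hole you need two further facts: (i) drop steps also satisfy a displacement-type sufficient decrease, $h(u)-h(u^{\dagger}) \geq \frac{n\kappa_{-}}{2}\|u-u^{\dagger}\|^2$ --- this is not in the statement of Lemma \ref{lemma2}, which asserts only $0$ in the ``otherwise'' case, but it follows from the projection inequality inside its proof, and since $\kappa_{-}$ is bounded below by a positive constant on the sublevel set it forces the drop displacements to vanish; and (ii) any run of consecutive drop steps has length at most $m$, since each drop removes a support index and none in the run adds one (and the support cannot be exhausted while $h$ stays finite). Together these imply every iterate lies within vanishing distance of a productive iterate, so a non-optimal accumulation point would force a nearby productive step with non-vanishing decrement --- the desired contradiction. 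The paper sidesteps this issue by a different device, pairing each drop iteration with an earlier iteration that raised (or initialized) the dropped coordinate, following Ahipasaoglu et al.\ and Todd, which bounds the total number of drop iterations rather than their displacements.
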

\begin{proof}\quad
By Lemma 2, if we only consider the add-, plus- and minus-iterations, we have:
\[
h(u) - h(u^{\dagger}) \geq \left\{
\begin{array}{l}
\frac{(n-\kappa_{+})^2}{2\kappa_{+}^2} = (\frac{\kappa_{+}-n}{\kappa_{+}^2})^2 \frac{(\kappa_{+})^2}{2} = \frac{L_{j+}(u)}{2}\|u^{\dagger}-u\|^2,\\
\frac{(n-\kappa_{-})^2}{2n\kappa_{-}} = (\frac{n-\kappa_{-}}{n\kappa_{-}})^2 \frac{n\kappa_{-}}{2} = \frac{L_{j-}(u)}{2}\|u^{\dagger}-u\|^2.\\
\end{array}
\right.
\]
As $\kappa(u)_i = x_i^T (XUX^T)^{-1}x_i$, we just need to show
\[
L_{j+}(u)=(\kappa_{+})^2=(x_{j+}^T(XUX^T)^{-1}x_{j+})^2 \geq L,
\]
and
\[
L_{j-}(u)=n\kappa_{-}=n(x_{j-}^T(XUX^T)^{-1}x_{j-}) \geq L,
\]
for some constant L. This reduces to show $\|(XUX^T)\|$ is upper bounded. As
\[
\|XUX^T\| =\max_{\|y\|=1}\|U^{\frac{1}{2}}X^T y\|^2 \leq (\|X^T\| \max_{i=1,\ldots,m}\sqrt{u_i})^2 =\|X^T\|_2^2 \max_{i=1,\ldots,m}u_i,
\]
therefore
\[
\begin{array}{ll}
x_i^T(XUX^T)^{-1}x_i & \geq \frac{\|x_i\|^2}{\|X^T\|^2 \max_{i=1,\ldots,m}u_i}, \\
\ & = \frac{\|x_i\|^2}{\|X^T\|^2 u_{\max}},\\
\ & \geq \frac{1}{\|X^T\|^2 u_{\max}},\\
\end{array}
\]
where $u_{\max}=\max_{i=1,\ldots,m}u_i$, and the last inequality is due to $\|x_i\|^2 \geq 1$ after shifting.

Denoting $L=\min\{\frac{n}{\|X^T\|^2 u_{\max}},\frac{1}{\|X^T\|^4 u_{\max}^2}\}$, we have $L_{j+} \geq L$, $L_{j-} \geq L$. Note from Lemma 3 we have $u_{\max}$ cannot be increased to infinity, and so that this L is exist.

Therefore we obtain that:
\[
h(u)-h(u^{\dagger}) \geq \frac{L}{2} \|u-u^{\dagger}\|^2,
\]
adding the above inequality from $k=0$ to $k=N$, we have:
\[
h(u_0) - h(u_N) \geq \frac{L}{2} \sum\limits_{i=1}^N \|u_{i+1} - u_i\|^2,
\]
and thus
\[
\sum\limits_{i=1}^N \|u_{i+1}-u_i\|^2 \leq \frac{2(h(u_0)-h(u_N))}{L},
\]
and therefore $\|u_{i+1}-u_i\| \rightarrow 0$, and $\|u_{i+1}-u_i\|^2=o(\frac{1}{N})$, and $u_{i+1}-u_i=o(\frac{1}{\sqrt{N}})$.

Moreover, from the first order optimality condition, since
\[
u^{\dagger}=\arg\min_{w\geq 0}\{\frac{1}{2}\|w-u+\frac{1}{L_j}\nabla h(u)_je_j\|^2\},
\]
so $\forall w \geq 0$, we have $(u^{\dagger}-u+\frac{1}{L_j}\nabla h(u)_j e_j)^T(w-u^{\dagger}) \geq 0$. Let $u^{*}$ be a limit point of the sequence $\{u_k\}$, there shall exist a subsequence $\{u_{k_i}\}$ converging to $u^{*}$, and
\[
(u_{k_i+1}-u_{k_i}+\frac{1}{L_j}\nabla h(u_{k_i})_j e_j)^T(w-u_{k_i+1}) \geq 0, \forall w \geq 0,
\]
therefore we have $\nabla h(u^{*})_j e_j^T (w-u^{*}) \geq 0$, $\forall w \geq 0$. In this sense, we have $\nabla h(u^{*})_j=0$ for $u_j^{*}>0$.

Moreover we have $\|\nabla h(u_k)_j e_j\| \leq \frac{\|u_{k+1}-u_k\|}{L_j} = o(\frac{1}{\sqrt{k}})$.
Therefore when only consider the ``add'', ``plus'' and ``minus'' iteration, the sequence converges and the number of iterations is $o(\frac{1}{\epsilon^2})$.

As for the drop-iterations, they can be paired with a previous iteration where either $u_{k_{j+}}$ was increased from zero, or $u_{k_{j-}}$ is decreased to zero for the first time which was positive at the initial iteration (\cite{Ahipasaoglu2008} and \cite{Todd2016}). From this we know that the number of the drop-iterations can be at most the sum of $o(\frac{1}{\epsilon^2})$ and the number of nonzero coordinates of $u_0$, which is $2n$ under Kumar-Yildirim's initialization scheme, and $m$ under Khachiyan's initialization scheme.

Then it can be concluded that the sequence generated by the CD algorithm is convergent to the optimal solution of problem $(D1)$.
\end{proof}

\begin{proposition}\quad
The optimal solution $u^{*}$ of problem (D1) and (D2) is unique.
\end{proposition}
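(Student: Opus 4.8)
The plan is to reduce the statement for both problems to a single convexity argument and then to separate the uniqueness of the implied ellipsoid from the uniqueness of the weight vector. First I would note that, by the $\lambda=1$ reduction already used to pass from ($D_1$) to ($D_2$), the two problems share the same set of minimizers, so it suffices to treat ($D_2$).

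Next I would exploit the composite structure $g(u) = \phi(A(u))$, where $A(u) = XUX^T = \sum_{i=1}^m u_i x_i x_i^T$ is \emph{affine} in $u$ and $\phi(M) = -\ln\det M$ is \emph{strictly} convex on $\mathbb{S}_{++}^n$ (the fact already invoked in the existence-and-uniqueness discussion). Suppose $u^{(1)}$ and $u^{(2)}$ are both optimal and set $\bar u = \tfrac12(u^{(1)}+u^{(2)}) \in \triangle_m$. By convexity $\bar u$ is again optimal, so
\[
\phi\!\left(\tfrac12 A(u^{(1)}) + \tfrac12 A(u^{(2)})\right) = \tfrac12\phi(A(u^{(1)})) + \tfrac12\phi(A(u^{(2)})).
\]
Strict convexity of $\phi$ then forces $A(u^{(1)}) = A(u^{(2)})$, i.e. $XU^{(1)}X^T = XU^{(2)}X^T$. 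Inverting, the matrix $H^{*} = (XU^{*}X^T)^{-1}$ is the same for every optimal $u^{*}$; this is exactly the unique primal optimizer, so this step recovers and is consistent with the uniqueness of the optimal ellipsoid.

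The remaining, and hardest, step is to upgrade uniqueness of the matrix $M^{*} := XU^{*}X^T$ to uniqueness of the weight vector $u^{*}$ itself. This is nontrivial precisely because the linear map $u \mapsto XUX^T$ need not be injective: the outer products $x_i x_i^T$ live in the $\tfrac{n(n+1)}{2}$-dimensional space $\mathbb{S}^n$, so when $m > \tfrac{n(n+1)}{2}$ many weight vectors can realize the same $M^{*}$. To close the gap I would bring in complementary slackness from the optimality conditions: every optimal $u^{*}$ is supported on the contact set $I^{*} = \{\, i : x_i^T H^{*} x_i = n \,\}$, which is fixed once $H^{*}$ is fixed. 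The system $\sum_{i \in I^{*}} u_i\, x_i x_i^T = M^{*}$ together with $\sum_{i\in I^{*}} u_i = 1$ then pins down $u^{*}$ \emph{provided} the contact outer products $\{\, x_i x_i^T : i \in I^{*} \,\}$ are linearly independent in $\mathbb{S}^n$. I expect this linear independence (nondegeneracy) of the contact points to be the main obstacle: it is what genuinely distinguishes uniqueness of the weights from the easy uniqueness of the ellipsoid, and it is the place where either an explicit nondegeneracy hypothesis or John's bound on the number of contact points must be used.
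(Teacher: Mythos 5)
Your argument up to the matrix identity $XU^{(1)}X^T = XU^{(2)}X^T$ is exactly the paper's proof: the paper takes two optimal points $u_1,u_2$, notes that their midpoint is optimal, invokes strict convexity of $-\ln\det$ to conclude $XU_1X^T = XU_2X^T$, and then ends with ``and thus $u_1=u_2$'' --- with no further justification. That final clause is precisely the step you single out as the hardest one, and you are right not to take it for free: it is a genuine gap in the paper, and in fact the proposition is false without an additional nondegeneracy hypothesis. Concretely, take $n=2$ and the four points $x_1=(1,0)$, $x_2=(0,1)$, $x_3=(-1,0)$, $x_4=(0,-1)$. Then $x_1x_1^T=x_3x_3^T$ and $x_2x_2^T=x_4x_4^T$, so $XUX^T=\mathrm{diag}(u_1+u_3,\,u_2+u_4)$ and $g(u)=-\ln\bigl((u_1+u_3)(u_2+u_4)\bigr)$ is minimized over $\triangle_4$ by \emph{every} $u$ with $u_1+u_3=u_2+u_4=\tfrac12$: the dual optimal set is a two-dimensional polytope, not a point, even though all these $u$ produce the same matrix $\tfrac12 I_2$ and the same enclosing circle. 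The same degeneracy occurs whenever the contact outer products $\{x_ix_i^T : x_i^T H^{*}x_i=n\}$ are linearly dependent in $\mathbb{S}^n$ (duplicated contact points being the simplest case), which is exactly the obstruction you describe.

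So your proposal is not a complete proof of the stated proposition, but no complete proof exists: what is provable --- and what your argument establishes, identically to the paper --- is uniqueness of the optimal matrix $XU^{*}X^T$, hence of the ellipsoid $H^{*}=(XU^{*}X^T)^{-1}$. Upgrading this to uniqueness of the weight vector $u^{*}$ requires precisely the linear independence of the contact outer products that you identify (together with complementary slackness restricting the support to the contact set), and that condition can fail. The proposition should therefore either be read as uniqueness of $H^{*}$ --- which is all that is genuinely needed downstream, e.g.\ in the convergence statement following Theorem 1 --- or be stated under an explicit nondegeneracy assumption on the contact points. Your diagnosis is more careful than the paper's own proof on exactly this point.
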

\begin{proof}\quad
Denote $C_u=\{u|u\geq0\}$, and the set of optimal solutions as $X^{*}$. Since $h(u)$ and $C_u$ are closed and convex, then so is $X^{*}$ (see \cite{LuoTseng1992}). Assume $u_1, u_2 \in X^{*}$, then $\frac{u_1+u_2}{2} \in X^{*}$ follows as $X^{*}$ is convex. Therefore
\[
h(\frac{u_1+u_2}{2})=h(u^{*})=\frac{h(u_1)+h(u_2)}{2}.
\]
As the function $f(H)=-\ln \det H$ is strictly convex, we have $XU_1X^T=XU_2X^T$, and thus $u_1=u_2$.
\end{proof}
{\bf Theorem 1 and Proposition 1 show that the sequence generated by the CD algorithm converges to the unique optimal solution of problem (D1) $u^{*}$.}
\subsection{Global Convergence Rate of the CD algorithm}
Enlightened by the previous work of great significance in \cite{Khachiyan1996}, \cite{Ahipasaoglu2008} and \cite{Todd2016}, we can use lemma 2 and 3 to prove the globally sublinear convergence rate and the locally linear convergence rate of the CD algorithm.

We consider the globally convergence results. Suppose that the initial point $u_0$ is $\epsilon_0$ primal feasible, then $\epsilon_0 \leq m-1$ for the Khachiyan's initialization scheme (KIS), and $\epsilon_0 \leq n^5-1$ for the Kumar - Yildirim's initialization scheme (KYIS) (see Lemma 3 in \cite{Khachiyan1996}and Corollary 3.4 in \cite{Todd2016}). Denote $c=\max\{m-1, n^5-1\}$. Moreover we note that when $u$ is a $\delta$ primal feasible solution with $\delta >1$, only ``add-'' and ``increase-'' iteration activates, and thus $\delta$ is decreasing monotonically. Therefore we could apply the idea of \cite{Khachiyan1996} to divide the iterations into two stages: $1< \delta < c$, and $\delta \leq 1$.

For the first stage, suppose for each $u_k$, we denote $H(u_k)=(XU_kX^T)^{-1}$, and $f(H(u_k))=-\ln \det H(u_k)$. It is obviously that $f(H(u^{*}))$ is the optimal value of the primal problem (P) when $u^{*}$ is the optimal solution of the dual problem $(D1)$. Based on the analysis subsequent to lemma 2, we could bound the decrement of $f(H(u))$ of two successive points $u$ and $u^{\dagger}$, when $u$ is a $\delta$ primal feasible solution with $1<\delta\leq c$. Since
\begin{equation}
\begin{array}{ll}
f(H(u)) - f(H(u^{\dagger})) & \geq \ln (2-\frac{n}{\kappa(u)_{+}}),\\
\ & = \ln (2-\frac{1}{1+\delta}),\\
\ & \geq c_1 \ln (1+\delta),\\
\ & \geq \frac{c_1}{n} (f(H(u)))-f(H(u^{*})),\\
\end{array}
\end{equation}
where $c_1 = \frac{\ln(2m-1)}{\ln m}-1$ for the KIS, and $c_1=\frac{\ln(1+2n^5)}{1+n^5}-1$ for the KYIS. The last inequality is due to $f(H(u))-f(H(u^{*}))=g(u)-g(u^{*})\leq n \ln(1+\delta)$ (see Lemma 3 in \cite{Khachiyan1996}). Therefore we obtain that:
\[
\begin{array}{ll}
f(H(u_k))-f(H(u^{*})) & \leq (1-\frac{c_1}{n})^k (f(H(u_0))-f(H(u^{*}))),\\
\ & \leq
\left\{
\begin{array}{ll}
e^{-\frac{c_1 k}{n}} n \ln m, &\  \textrm{if KIS applies}, \\
e^{-\frac{c_1 k}{n}} 5 n \ln n, &\  \textrm{if KYIS applies}. \\
\end{array}
\right.\\
\end{array}
\]
Moreover, suppose $u_k$ is $\epsilon_k$ primal feasible with $1< \epsilon_k \leq c$
\begin{equation}
\begin{array}{ll}
f(H(u_k))-f(H(u^{*})) & \geq f(H(u_k))-f(H(u_{k+1})),\\
\ & \geq c_1 \ln (1+\epsilon_k),\\
\ & \geq c_2,\\
\end{array}
\end{equation}
where $c_2=c_1 \ln2$. Therefore by (1) and (2), we conclude that when KIS is applied to the CD algorithm, we obtain a 1 primal feasible solution within $\frac{n}{c_1} (\ln\frac{n \ln m}{c_2})$ steps; when KYIS is applied to the CD algorithm, we obtain a 1 primal feasible solution within $\frac{n}{c_1} (\ln\frac{5 n \ln n}{c_2})$ steps.

For the second stage, we use the decrement of $h(u)$ to bound the iteration steps. Suppose $u$ is a $\delta$ approximately optimal solution with $0< \delta \leq 1$, then we have:
\begin{equation}
\begin{array}{ll}
h(u) - h(u^{*}) & \leq g(u) -g(u^{*}) + n(e^T u-1),\\
\ & \leq n \ln (1+\delta) + \sum\limits_{\{i|u_i >0\}} u_i (n-\kappa(u)_i),\\
\ & \leq n \ln (1+\delta) + mn\delta u_{\max},\\
\ & \leq n(1+m u_{\max})\delta.\\
\end{array}
\end{equation}
The third inequality is due to $(1-\delta)n \leq \kappa(u)_i \leq (1+\delta)n$, and $u_{\max}$ is bounded by lemma 3.

Furthermore, in order to use the proof idea of \cite{Khachiyan1996}, we need to bound the number of iteration steps of halving $\delta$, when $\delta < \frac{1}{2}$.

\begin{lemma}\quad\label{lemma4}
Suppose $\delta \leq \frac{1}{2}$,
\begin{itemize}
\item If $u$ is not $\delta - $ primal feasible, any ``add-'', ``increase-'' iteration improves $h(u)$ by at least $\frac{2}{5}\delta^2$;
\item If a feasible $u$ does not satisfy the $\delta-$ approximate optimality conditions, any ``decrease -'' iteration improves $h(u)$ by at least $\frac{1}{2} \delta^2$.
\end{itemize}
\end{lemma}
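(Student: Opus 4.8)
The plan is to read each hypothesis as a scalar bound on the governing gradient component and then take the one-step decrease straight from Lemma~\ref{lemma2} (or, for a sharper constant, from the exact decrements computed right after it). Using the quantities $\epsilon_{+} = \frac{\kappa_{+}}{n} - 1$ and $\epsilon_{-} = 1 - \frac{\kappa_{-}}{n}$ already introduced for the WA algorithm, I first note: if $u$ is not $\delta$-primal feasible then some $\kappa(u)_i > (1+\delta)n$, and since $j_{+} = \arg\max_i \kappa(u)_i$ this forces $\kappa_{+} > (1+\delta)n$, i.e.\ $\epsilon_{+} > \delta$. Likewise, if a feasible $u$ (so $\kappa(u)_i \leq n$ for all $i$) violates $\delta$-approximate optimality, then some coordinate with $u_i > 0$ has $\kappa(u)_i < (1-\delta)n$; as $j_{-} = \arg\min_{i : u_i > 0} \kappa(u)_i$ we obtain $\kappa_{-} < (1-\delta)n$, i.e.\ $\epsilon_{-} > \delta$. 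Throughout $0 < \delta \leq \frac{1}{2}$ and $0 < \epsilon_{-} < 1$ (the latter because $\kappa_{-} > 0$).

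I would settle the decrease step first, since it is immediate. By the second branch of Lemma~\ref{lemma2}, a genuine decrease iteration (the non-drop case $\frac{n - \kappa_{-}}{\kappa_{-} n} < u_{j_{-}}$) gives
\[
h(u) - h(u^{\dagger}) \geq \frac{(n - \kappa_{-})^2}{2 n \kappa_{-}}.
\]
Substituting $\kappa_{-} = (1 - \epsilon_{-})n$ collapses the right-hand side to $\frac{\epsilon_{-}^2}{2(1 - \epsilon_{-})}$, and because $0 < \epsilon_{-} < 1$ we have $\frac{1}{1 - \epsilon_{-}} \geq 1$, so the decrease is at least $\frac{\epsilon_{-}^2}{2} > \frac{\delta^2}{2}$. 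This establishes the second bullet using nothing beyond $\epsilon_{-} > \delta$.

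For the add/increase step I would substitute $\kappa_{+} = (1+\epsilon_{+})n$ into the exact decrease recorded after Lemma~\ref{lemma2}, which covers the ``add'' and ``increase'' cases at once because each performs the unclipped move $u^{\dagger} = u + \frac{\kappa_{+} - n}{\kappa_{+}^2} e_{j_{+}}$:
\[
h(u) - h(u^{\dagger}) = \ln\left(2 - \frac{n}{\kappa_{+}}\right) - \frac{n(\kappa_{+} - n)}{\kappa_{+}^2} = \ln\frac{1 + 2\epsilon_{+}}{1 + \epsilon_{+}} - \frac{\epsilon_{+}}{(1 + \epsilon_{+})^2} =: \varphi(\epsilon_{+}).
\]
The plan is to show $\varphi$ increases on $(0, \infty)$ --- a short computation gives $\varphi'(\epsilon) = \frac{\epsilon(1 + 3\epsilon)}{(1 + 2\epsilon)(1 + \epsilon)^3} > 0$ --- so that $\epsilon_{+} > \delta$ yields $\varphi(\epsilon_{+}) > \varphi(\delta)$, whence the first bullet reduces to the single-variable inequality $\varphi(\delta) \geq \frac{2}{5}\delta^2$ on $(0, \frac{1}{2}]$. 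A cleaner but weaker alternative starts from the smoothness estimate $\frac{(n - \kappa_{+})^2}{2\kappa_{+}^2} = \frac{\epsilon_{+}^2}{2(1 + \epsilon_{+})^2}$ of Lemma~\ref{lemma2}, again monotone in $\epsilon_{+}$, which after inserting $\epsilon_{+} > \delta$ yields $\frac{\delta^2}{2(1 + \delta)^2} \geq \frac{2}{9}\delta^2$ for $\delta \leq \frac{1}{2}$.

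The main obstacle is exactly this scalar step for the add/increase branch: securing the stated constant. The smoothness estimate only reaches $\frac{2}{9}\delta^2$, so any larger constant must be squeezed out of the logarithmic decrement $\varphi$. I would study the ratio $\varphi(\delta)/\delta^2$, using the expansion $\varphi(\delta) = \frac{1}{2}\delta^2 - \frac{2}{3}\delta^3 + O(\delta^4)$ near the origin together with the fact that this ratio is monotone decreasing on $(0, \frac{1}{2}]$; consequently the binding case is the endpoint $\delta = \frac{1}{2}$, and the largest uniformly admissible constant is precisely $4\varphi(\frac{1}{2})$. The one genuine check in the whole argument is therefore whether the target constant is admissible at this endpoint --- everything else is the routine substitution and monotonicity described above.
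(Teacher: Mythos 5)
Your second bullet (the decrease branch) is correct and coincides with the paper's own argument: substituting $\kappa_-=(1-\epsilon_-)n$ gives $\frac{(n-\kappa_-)^2}{2n\kappa_-}=\frac{\epsilon_-^2}{2(1-\epsilon_-)}\geq\frac{\epsilon_-^2}{2}>\frac{\delta^2}{2}$. The genuine gap is in the add/increase branch, and it is exactly the endpoint check you deferred: that check fails. Your reduction is sound up to that point --- the exact decrement is $\varphi(\epsilon_+)=\ln\frac{1+2\epsilon_+}{1+\epsilon_+}-\frac{\epsilon_+}{(1+\epsilon_+)^2}$, it is increasing in $\epsilon_+$, and the binding case is $\delta=\frac12$ --- but $4\varphi(\tfrac12)=4\ln\tfrac43-\tfrac89\approx 0.2618$, which is smaller than $\tfrac25=0.4$, so the inequality $\varphi(\delta)\geq\frac25\delta^2$ does not hold on all of $(0,\tfrac12]$. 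Worse, since $\varphi(\epsilon_+)$ is the \emph{exact} one-step decrement and $\epsilon_+$ can be arbitrarily close to $\delta$, this is not a defect of your method but of the statement: for $\delta=\tfrac12$ the decrement can be as small as about $0.065$ against a claimed lower bound of $0.1$, and for $\delta=0.4$ one has $\varphi(0.4)=\ln\tfrac97-\tfrac{10}{49}\approx 0.047<\frac25\delta^2=0.064$. The constant $\frac25$ is unattainable once $\delta$ exceeds roughly $0.165$; no completion of your argument (or any other) can rescue it.

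For comparison, the paper's own proof is your ``cleaner but weaker alternative'' carried out with an arithmetic slip: it chains $h(u_k)-h(u_{k+1})\geq\frac{\epsilon_k^2}{2(1+\epsilon_k)^2}\geq\frac{\delta^2}{2(1+\delta)^2}\geq\frac25\delta^2$, but the last inequality requires $(1+\delta)^2\leq\frac54$, i.e. $\delta\leq\frac{\sqrt5}{2}-1\approx 0.118$, not $\delta\leq\frac12$ --- precisely the obstruction your $\frac29$ computation identifies. So your instinct that ``securing the stated constant'' is the crux was right, and the resolution is that the lemma's constant must be weakened: the smoothness route legitimately yields $\frac29\delta^2$, and your exact-decrement route yields the sharpest possible uniform constant $4\ln\frac43-\frac89\approx 0.26$. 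Either version is fully adequate for the paper's downstream use in Theorem 2; only the numerical constants change (with $\frac29$ one gets $k(\delta)\leq 18n(1+mu_{\max})/\delta$ and $K(\epsilon)\leq 36n(1+mu_{\max})/\epsilon$ in place of $10$ and $20$). As submitted, however, your proposal does not prove the stated lemma, because the one step you flagged as ``the one genuine check'' is false.
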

\begin{proof}\quad
From Lemma 2 in the case of the ``add -'' and ``increase -'' iteration,
\[
h(u_k) - h(u_{k+1}) \geq \frac{(n-\kappa(u_k)_{+})^2}{2\kappa(u_k)_{+}^2},
\]
with $\kappa(u_k)_{+}=(1+\epsilon_k) n$, and $\epsilon_k \geq \delta$. Therefore we have:
\[
\begin{array}{ll}
h(u_k) - h(u_{k+1}) & \geq \frac{\epsilon_k^2}{2(1+\epsilon_k)^2},\\
\ & \geq \frac{\delta^2}{2(1+\delta)^2},\\
\ & \geq \frac{2}{5} \delta^2.\\
\end{array}
\]
The second inequality is due to the monotone nondecreasing property of the function $\phi(x)=\frac{x^2}{(1+x)^2}$, and the third inequality is due to the fact $\delta \leq \frac{1}{2}$.

Similarly, in the case of the ``decrease '' iteration, we have:
\[
\begin{array}{ll}
h(u_k) - h(u_{k+1}) & \geq \frac{(n-\kappa(u_k)_{-})^2}{2n\kappa(u_k)_{-}},\\
\ & \geq \frac{\delta^2}{2(1-\delta)},\\
\ & \geq \frac{\delta^2}{2}.\\
\end{array}
\]
\end{proof}

Based on (3) and lemma 4, let $k(\delta)$ denote the number of iterations of the CD algorithm from the first iteration that is $\delta$ approximately optimal until the first that is $\frac{\delta}{2}$ approximately optimal, then :
\[
k(\delta) \leq \frac{n(1+m u_{\max})}{\frac{2(\delta \slash 2)^2}{5}}=\frac{10n(1+mu_{\max})}{\delta}.
\]
Therefore if $K(\epsilon)$ denotes the number of iterations from the first iterate that is 1 approximately optimal until the first that is $\epsilon$ approximately optimal, we find:
\[
\begin{array}{ll}
K(\epsilon) & \leq k(1) + k(\frac{1}{2}) +\cdots+k(\frac{1}{2^{\ulcorner\ln \frac{1}{\epsilon}\urcorner-1}}),\\
\ & \leq \frac{20n(1+m u_{\max})}{\epsilon}.\\
\end{array}
\]
In addition, as for the drop-iterations, similar explanation in the proof of theorem 1 can be applied.

We concludes the above analysis into the theorem below.
\begin{theorem}
The total number of iterations for the CD algorithm with KIS to obtain an $\epsilon$ approximately optimal solution is at most $\frac{40n(1+m u_{\max})}{\epsilon} + \frac{n}{c_1} (\ln\frac{n \ln m}{c_2})+m$; The total number of iterations for the CD algorithm with KYIS to obtain an $\epsilon$ approximately optimal solution is at most $\frac{40n(1+m u_{\max})}{\epsilon} + \frac{n}{c_1} (\ln\frac{5 n \ln n}{c_2})+2n$.
\end{theorem}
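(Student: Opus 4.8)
The plan is to assemble the bounds established in equations (1)--(3), Lemma \ref{lemma4}, and the drop-iteration pairing argument from the proof of Theorem 1, by partitioning the run of the CD algorithm according to the value of the feasibility parameter $\delta$. First I would split the iterations into a first stage in which $u$ is $\delta$-primal feasible with $1 < \delta \leq c$ (where $c=\max\{m-1,\, n^5-1\}$), and a second stage in which $\delta \leq 1$. The observation that justifies this split is the one recorded just before equation (1): as long as $\delta > 1$ only ``add-'' and ``increase-'' iterations are active, so $\delta$ decreases monotonically and no drop-iterations occur in the first stage.

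For the first stage I would invoke the geometric decay estimate (1), namely $f(H(u_k)) - f(H(u^{*})) \leq (1-\frac{c_1}{n})^k (f(H(u_0)) - f(H(u^{*})))$ with the initial-gap bounds $n\ln m$ (KIS) and $5n\ln n$ (KYIS), together with the per-iteration floor (2), $f(H(u_k)) - f(H(u_{k+1})) \geq c_2$. These combine to show that a $1$-primal feasible point is reached within $\frac{n}{c_1}(\ln\frac{n\ln m}{c_2})$ iterations under KIS and $\frac{n}{c_1}(\ln\frac{5n\ln n}{c_2})$ under KYIS; this produces the first summand of each total.

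For the second stage I would use the dual-gap estimate (3), $h(u)-h(u^{*}) \leq n(1+mu_{\max})\delta$ with $u_{\max}$ bounded via Lemma \ref{lemma3}, together with the halving bounds of Lemma \ref{lemma4}. Summing the per-halving counts $k(\delta) \leq \frac{10n(1+mu_{\max})}{\delta}$ over the geometric sequence $\delta = 1, \frac{1}{2}, \frac{1}{4}, \ldots$ down to $\epsilon$ telescopes to $K(\epsilon) \leq \frac{20n(1+mu_{\max})}{\epsilon}$, which bounds the number of non-drop (``add-'', ``increase-'', ``decrease-'') iterations of the second stage.

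Finally I would account for the drop-iterations by reusing the pairing argument from the proof of Theorem 1 (following \cite{Ahipasaoglu2008} and \cite{Todd2016}): each drop-iteration is matched either to a prior iteration that raised a coordinate from zero or to a coordinate positive at initialization, so the number of drop-iterations is at most the number of non-drop iterations plus the size of the support of $u_0$, i.e.\ $m$ for KIS and $2n$ for KYIS. Adding the two copies of $K(\epsilon)$ (one for the non-drop iterations and one for their paired drops) yields the $\frac{40n(1+mu_{\max})}{\epsilon}$ term, and adjoining the first-stage count and the initialization correction gives the two stated totals. I expect the main subtlety to lie in this drop-iteration bookkeeping: one must check that the matching is injective and exhaustive, so that doubling $K(\epsilon)$ and adding the support size of $u_0$ genuinely covers every iteration of the algorithm.
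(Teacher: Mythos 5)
Your proposal follows essentially the same route as the paper's proof: the same two-stage split at $\delta=1$ justified by the fact that only ``add-'' and ``increase-'' steps occur while $\delta>1$, the same use of the geometric-decay bound (1) together with the per-iteration floor (2) to get the $\frac{n}{c_1}\ln\frac{n\ln m}{c_2}$ (resp.\ $\frac{n}{c_1}\ln\frac{5n\ln n}{c_2}$) first-stage count, the same halving/telescoping argument via (3) and Lemma 4 giving $K(\epsilon)\leq \frac{20n(1+mu_{\max})}{\epsilon}$, and the same drop-iteration pairing that doubles $K(\epsilon)$ and adds the support size of $u_0$ ($m$ for KIS, $2n$ for KYIS). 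The drop-iteration bookkeeping you flag as the main subtlety is handled exactly as tersely in the paper (it just invokes the pairing argument from Theorem 1), so your reconstruction matches the paper's proof in both structure and constants.
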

\subsection{Local Convergence Rate of the CD algorithm}
We follow the technique in \cite{Ahipasaoglu2008},\cite{Todd2016} to prove the locally linear convergence rate of the CD algorithm. Suppose $u$ satisfies the $\delta$ approximately optimal condition, and $H(u) = (XUX^T)^{-1}$, set
\[
z_i = \left\{
\begin{array}{ll}
n \delta, &\ \textrm{if}\ u_i =0, \\
x_i^T H(u) x_i - n, & \ \textrm{if}\ u_i >0.\\
\end{array}
\right.
\]

Consider the $z-$ perturbation problem in \cite{Ahipasaoglu2008}, we have $u^T z = \sum\limits_{\{i|u_i>0\}} u_i (x_i^T H(u) x_i - n)=n(1-e^T u)$.

\begin{lemma}\quad\label{lemma5}
Suppose $u$ is a $\delta - $ approximately optimal solution, then
\[
h(u) - h^{*} \leq \|u-u^{*}\| \|z\|
\]
\end{lemma}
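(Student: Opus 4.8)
The plan is to exploit the convexity of $h$ together with the componentwise structure of the perturbation vector $z$. Since $h(u)=-\ln\det(XUX^T)+n(e^Tu-1)$ is convex (the $-\ln\det$ term is convex and the remainder is affine), the gradient inequality for convex functions, applied at the point $u$, gives
\[
h(u)-h^{*} \leq \langle \nabla h(u), u-u^{*}\rangle = \sum_{i} (n-\kappa(u)_i)(u_i-u_i^{*}),
\]
where I have used the already-recorded identity $\nabla_i h(u)=n-\kappa(u)_i$. Everything then reduces to showing the right-hand side is bounded above by $\langle z, u^{*}-u\rangle$, after which a single Cauchy-Schwarz step closes the argument.

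First I would establish the key componentwise inequality $(n-\kappa(u)_i)(u_i-u_i^{*}) \leq z_i(u_i^{*}-u_i)$ for every $i$. For the indices with $u_i>0$ this is in fact an equality, since there $z_i=x_i^T H(u) x_i - n = \kappa(u)_i - n = -(n-\kappa(u)_i)$, so the two sides coincide identically. For the indices with $u_i=0$ the factor $u_i-u_i^{*}=-u_i^{*}$ is nonpositive, while $\delta$-primal feasibility of $u$ gives $\kappa(u)_i \leq (1+\delta)n$, hence $n-\kappa(u)_i \geq -n\delta = -z_i$; multiplying this lower bound by the nonpositive number $u_i-u_i^{*}$ reverses the inequality and yields exactly the claimed bound.

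Summing the componentwise estimate over $i$ produces $\langle \nabla h(u), u-u^{*}\rangle \leq \langle z, u^{*}-u\rangle$, and combining this with the convexity inequality gives $h(u)-h^{*} \leq \langle z, u^{*}-u\rangle$. A final application of Cauchy-Schwarz, $\langle z, u^{*}-u\rangle \leq \|z\|\,\|u^{*}-u\| = \|z\|\,\|u-u^{*}\|$, delivers the statement. I note in passing that only the primal-feasibility half of $\delta$-approximate optimality is actually consumed, at the zero coordinates; the positive coordinates are handled by the exact identity above.

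I expect the only delicate point to be the treatment of the zero coordinates $u_i=0$, where $z_i$ is the surrogate value $n\delta$ rather than the genuine residual $\kappa(u)_i-n$, so the identity that served the positive coordinates must be replaced by the feasibility-driven inequality. The associated sign bookkeeping—making sure that multiplying the gradient bound by the nonpositive quantity $u_i-u_i^{*}$ orients the inequality correctly—is where I would be most careful; everything else is convexity plus Cauchy-Schwarz.
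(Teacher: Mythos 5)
Your proof is correct, but it takes a genuinely different route from the paper's. The paper proves the lemma through the sensitivity machinery of Ahipasaoglu--Sun--Todd: it introduces the value function $\Phi(z)$ of the $z$-perturbation problem, observes that a $\delta$-approximately optimal $u$ is \emph{exactly} optimal for the $z$-perturbed problem (this is precisely why $z$ is defined with the true residuals $\kappa(u)_i-n$ at positive coordinates and the surrogate $n\delta$ at zero coordinates), and then uses convexity of the value function together with $u^Tz=n(1-e^Tu)$ and $e^Tu^{*}=1$ to reach $h(u)-h^{*}\leq (u^{*}-u)^Tz$, finishing with Cauchy--Schwarz. You reach the same intermediate inequality $h(u)-h^{*}\leq \langle z,u^{*}-u\rangle$, but by an elementary, self-contained path: the gradient inequality for the convex function $h$ at $u$, followed by the coordinatewise comparison $(n-\kappa(u)_i)(u_i-u_i^{*})\leq z_i(u_i^{*}-u_i)$, which is an identity at positive coordinates and follows from $\delta$-primal feasibility at zero coordinates. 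What your approach buys is transparency and independence from the external reference: no perturbed problem or value-function convexity is needed, and it becomes explicit that only the feasibility half of approximate optimality is consumed (and only at coordinates with $u_i=0$). What the paper's approach buys is economy within a framework it needs anyway: the $z$-perturbation viewpoint is reused to produce the data-dependent constant $M$ in the local linear convergence theorem that follows. It is also worth noting that the inequality chain as printed in the paper is oriented as a lower bound on $h(u)$ (it reads $h(u)\geq h^{*}-\|u-u^{*}\|\|z\|$), which appears to be a typographical garbling of the intended sensitivity argument; your derivation produces the claimed upper bound cleanly and could serve as a corrected, self-contained substitute.
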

\begin{proof}\quad
Let $\Phi(z)$ denote the value function, the optimal value of the $z-$ perturbation problem.
\[
\begin{array}{ll}
h(u) & = \Phi(z) + n(e^T u-1),\\
\ & \geq \Phi(0) + (u^{*}- u)^T z,\\
\ & \geq  h^{*} - \|u-u^{*}\| \|z\|.\\
\end{array}
\]
The second inequality is due to the convexity of the function $f(H) = -\ln \det H$, and the second inequality is due to $e^T u^{*}=1$.
\end{proof}

\begin{theorem}\quad
The CD algorithm is locally linearly convergent.
\end{theorem}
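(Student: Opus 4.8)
The plan is to establish local linear convergence by combining the error-bound inequality of Lemma~\ref{lemma5} with the per-iteration decrease guaranteed by Lemma~\ref{lemma2}, following the strategy of \cite{Ahipasaoglu2008} and \cite{Todd2016}. First I would observe that once the algorithm has entered the regime where $u$ is a $\delta$-approximately optimal solution with $\delta$ small, each of the quantities $z_i$ defined just before Lemma~\ref{lemma5} is of order $\delta$: indeed for $u_i>0$ we have $|x_i^T H(u) x_i - n| \leq \delta n$ by $\delta$-approximate optimality, and for $u_i=0$ we set $z_i = n\delta$. Hence $\|z\| \leq n\delta\sqrt{m}$ (or a similar bound), so $\|z\|$ is controlled linearly by $\delta$.

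Next I would relate $\delta$ back to the gap $h(u)-h^*$. The key tool is an error bound of the form $\|u-u^*\| \leq C(h(u)-h^*)^{1/2}$ or, more directly useful here, the linear convergence mechanism: Lemma~\ref{lemma5} gives $h(u)-h^* \leq \|u-u^*\|\,\|z\|$, and I would invoke the strong-convexity-type inequality available near the optimum (coming from the strict convexity of $f(H)=-\ln\det H$ and the fact that $u_{\max}$ is bounded by Lemma~\ref{lemma3}) to bound $\|u-u^*\|$ in terms of $(h(u)-h^*)^{1/2}$. Combining these, the optimality gap $h(u)-h^*$ is bounded above by a constant multiple of $\delta$ times $(h(u)-h^*)^{1/2}$, which after squaring yields $h(u)-h^* \leq \text{const}\cdot\delta^2$; in the other direction, Lemma~\ref{lemma2} and Lemma~\ref{lemma4} show that each active (non-drop) iteration decreases $h$ by at least a constant multiple of $\delta^2$. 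Comparing the guaranteed decrease (of order $\delta^2$) with the current gap (also of order $\delta^2$) produces a fixed fractional reduction $h(u^{\dagger})-h^* \leq (1-\rho)(h(u)-h^*)$ for some $\rho\in(0,1)$, which is exactly linear convergence.

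The key steps in order are therefore: (i) bound $\|z\|$ linearly in $\delta$ using the $z_i$ definition and $\delta$-approximate optimality; (ii) use Lemma~\ref{lemma5} together with a local strong-convexity/error-bound estimate to show $h(u)-h^* = O(\delta^2)$; (iii) invoke Lemma~\ref{lemma2}/Lemma~\ref{lemma4} to show each effective iteration achieves a decrease of order $\delta^2$, hence of order $h(u)-h^*$; and (iv) assemble these into a geometric contraction of the gap, handling the finitely many drop-iterations separately exactly as in the proof of Theorem~1.

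The hard part will be step (ii), namely producing the local error bound that converts the bilinear estimate $h(u)-h^* \leq \|u-u^*\|\,\|z\|$ of Lemma~\ref{lemma5} into a genuinely quadratic bound $h(u)-h^* = O(\delta^2)$. The objective $h$ is neither globally smooth nor globally strongly convex, so I expect the delicate point to be localizing near $u^*$ and extracting a quadratic growth condition from the strict convexity of $-\ln\det$ restricted to the face of the simplex on which the optimal support lives; this is where the boundedness of $u_{\max}$ from Lemma~\ref{lemma3} and the data-dependent constant noted by \cite{Ahipasaoglu2008} enter, and where the argument must be carried out carefully rather than by direct computation.
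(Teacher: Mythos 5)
Your proposal follows essentially the same route as the paper's proof: bound the optimality gap of a $\delta$-approximately optimal $u$ by $O(\delta^2)$ via Lemma 5 combined with a data-dependent local error bound, compare this with the $\Omega(\delta^2)$ per-iteration decrease supplied by Lemma 4, and sum over successive halvings of $\delta$ to get an iteration count of the form $P+Q\log_2(1/\epsilon)$, handling drop-iterations as in Theorem 1. If anything, you are more explicit than the paper about the delicate step: the paper simply asserts the $M\delta^2$ gap bound ``by Lemma 5'' (with the inequality even typeset in the reversed direction), whereas you correctly identify that Lemma 5's bilinear estimate $h(u)-h^{*}\leq \|u-u^{*}\|\,\|z\|$ must be supplemented by a local quadratic-growth argument bounding $\|u-u^{*}\|$ by $O(\delta)$ before the quadratic bound follows.
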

\begin{proof}\quad
By lemma 5, there is a constant M such that for every sufficiently small positive $\delta$, any $\delta$ approximately optimal $u$ satisfies
\[
g(u)-g^{*} \geq M \delta^2.
\]
Therefore halving $\delta$ will require at most
\[
\frac{M \delta^2}{\frac{2(\delta\slash 2)^2}{5}}=10M
\]
iterations. Thus adding all these iterations gives a total of $20M\log_2(\frac{1}{\epsilon})$. As a consequent, there exists data dependent constants $P$ and $Q$ such that the number of iterations of the CD algorithm required to obtain an $\epsilon$ approximately optimal $u$ is at most
\[
P+ Q \log_2(\frac{1}{\epsilon}).
\]
\end{proof}

{\bf We note that from the proof of lemma 3, the decrement of $h(u)$ of the CD algorithm between two iteration points $u_k$ and $u_{k+1}$ is larger than that of the WA algorithm, this explains why the CD algorithm slightly outperforms the WA algorithm during the later stage of the iterations. The numerical results in the 5th section supports this theoretical results.}
\subsection{Complexity Analysis of the CD algorithm}
The CD algorithm consists of two stages, the initialization stage and the iteration stage. In the initialization stage, we shall calculate $\kappa_j$ for each $j$ after obtaining the initial point $u_0$ by Kumar-Yildirim's initialization scheme, whose complexity is $O(n^2m)$. In the calculation, first denote $XU_0X^T = A^T A$, where $A^T$ can be factorized as $A^T=Q_0 R_0$ with $Q_0$ and $R_0$ being an orthogonal matrix and a upper-triangle matrix respectively. Then it can be obtained that
\[
\begin{array}{cl}
\kappa_j & = x_j^T (XU_0X^T)^{-1} x_j \\
\ & = x_j^T R_0^{-1} R_0^{-T} x_j \\
\ & = (R_0^{-T} x_j)^{T} (R_0^{-T} x_j),\\
\end{array}
\]
from which we can see that the calculation includes the following:
\begin{enumerate}
\item calculating $A^T = X U_0^{\frac{1}{2}}$ whose complexity is $O(mn)$,
\item the QR factorization of $A^T$ whose complexity is $O(mn^2)$. The reason here using the QR decomposition instead of the Cholesky factorization is mainly due to the numerical stability (see \cite{Todd2016}),
\item calculating $R_0^{-T}$ whose complexity is $O(n^3)$,
\item calculating $R_0^{-T} x_j$ whose complexity is $O(n^2)$,
\item calculating $\kappa_j = (R_0^{-T} x_j)^T (R_0^{-T} x_j)$ whose complexity is $O(n)$.
\end{enumerate}
As there are in total $m$ different $\kappa_j$ s, the total complexity of the initialization part is $O(mn^2)+ O(m(mn+mn^2+n^3+n^2+n)) = O(m^2n^2)$.

During the iterations, the calculations include
\begin{enumerate}
\item finding out the biggest and smallest among the $\kappa_j$,$j=1,\ldots,m$, whose complexity is $O(m)$,
\item comparing $\frac{\kappa_{+} - n}{n}$ and $\frac{n-\kappa_{-}}{n}$, whose complexity is $O(1)$,
\item calculating the step size $\lambda$ and the next iteration point $u^{\dagger}$, whose complexity is $O(1)$,
\item adopting the Cholesky rank-one modification update $XU^{\dagger}X^T = XUX^T - \lambda x_j x_j^T$, whose complexity is $O(mn^2)$.
\end{enumerate}
As the number of iterations is $O(\frac{1}{\epsilon})$, the total complexity of this part is $O(\frac{1}{\epsilon} (m+n^2+mn^2)) = O(\frac{1}{\epsilon} (mn^2))$. Adding these two parts together, the complexity of the CD algorithm is  $O(mn^2 \frac{1}{\epsilon})$.

\section{Numerical Tests}
\subsection{Description of Samples}
Extensive computational tests have been carried out and discussed in several research papers (with reference to \cite{Sun2004}, \cite{Ahipasaoglu2008}). According to the most recent result by \cite{Todd2016}, we know that
\begin{enumerate}
\item The WA algorithm achieves the most accurate solutions.

\item Kumar and Yildirim's initialization scheme truly accelerated the speed of computation.

\item The points elimination strategy used in \cite{Ahipasaoglu2009}) speeds up the computation.

\item On moderately sized samples (dimension $\leq $ 30, number of points $\leq$ 1800), when using the active set strategy, the DRN algorithm outperforms the other algorithms, while without the active set strategy, the WA algorithm outperforms the other algorithms. On large-scale samples (dimension $\leq $ 30, number of points $\leq$ 30,000), the DRN method is superior. on huge-scale sized samples (dimension = 500, number of points =500,000), the WA algorithm is the only feasible algorithm, while the other methods failed because of memory problem.
\end{enumerate}

Based on the conclusion of \cite{Todd2016}, in this section, we focus only on the efficiency of the WA algorithm and the CD algorithm. The aim of this numerical test is to try to determine: under the same accuracy (the error is set to be $10^{-7}$), the efficiency of the WA algorithm and the CD algorithm on various test samples, including small size samples, moderate size samples, large-scale size samples and huge-scale size samples.
\begin{description}
\item [\bf Test samples.] According to \cite{Todd2016}, if the test samples are generated from a general Gaussian distribution, they will all lie close to the surface of an ellipsoid, which is a special situation. Therefore, we use the point generating strategy of \cite{Todd2016} to build up our test sets.

\item [\bf Size.] To compare the efficiencies of the WA algorithm and the CD algorithm (with constant stepsize) on various samples, we test the two algorithms with respect to 4 different sizes of data sets, which are: $n=10, m=500$ for small size test sets, $n=30, m=1800$ for moderate size test sets, $n=100, m=30,000$ for large-scale size test sets and $n=500, m=500,000$ for huge-scale size test sets. Moreover, in order to determine whether merely the dimension influences the efficiency of the CD algorithm and the WA algorithm, we use two more test sets ($n=500, m=1000$ for a high dimension and small size data set (HDSN), while $n=20, m=100,000$ for a low dimension and large size data set (LDLN).

\item [\bf Parameters.] According to Todd (2016), the WA algorithm with Kumar-Yilidirim's initialization scheme can achieve a high accuracy with the error bound being $\epsilon = 10^{-7}$, which is used here. With respect to the initialization scheme, Kumar-Yilidirim's initialization scheme is used in this study, since it is superior both theoretically and practically.
\end{description}
\subsection{Comparison of the CD and WA Algorithm}
The computational results of the CD algorithm and the WA algorithm on different sizes of test sets are illustrated in Table \ref{tab5}. The experiments were performed 10 times for each situation.

With respect to the small size sample, the average iterations and computing time of the WA algorithm outperforms the CD algorithm slightly. More specifically, 7 in 10 cases support the superiority of the WA algorithm, especially in cases 6 and 7, while the remaining 3 cases support the superiority of the CD algorithm. It is noted that the efficiency of the two algorithms on the small scale sample are nearly equivalent to each other, and it follows that with respect to small data sets, these two algorithms are competent. With respect to the moderate size sample, the superiority of the WA algorithm seems to be more apparent, that is, the average iterations of the WA algorithm is 268.6, which is less than 292.9 average iterations of the CD algorithm.
More specifically, 6 in 10 cases support the superiority of the WA algorithm, especially in case 8.
With respect to the large-scale size sample, the average iterations and computing time of the CD algorithm are 731 and 0.52 seconds, respectively, less than 767.1 iterations and 0.57 seconds of the WA algorithm. It is noted that 10 cases all support the superiority of the CD algorithm, and this superiority is much apparent. With respect to the huge-scale size sample, the superiority of the CD algorithm is much more apparent. The results supports that the efficiency of the CD algorithm outperforms that of the WA algorithm as the dimension of the data set is large.

\begin{table}\label{tab5}
\centering
{\footnotesize \begin{tabular}{|c|c|c|c|c|c|c|}
\hline
\multicolumn{1}{|c|}{\multirow {4}{*}{Sample Size}}&\multicolumn{2}{c|}{\multirow {3}{*}{Dimension}}&\multicolumn{4}{c|}{Algorithm}\\
\cline{4-7}
\multicolumn{1}{|c|}{}& \multicolumn{2}{c|}{}& \multicolumn{2}{c|}{CD Algorithm}&\multicolumn{2}{c|}{WA Algorithm} \\
\cline{4-7}
\multicolumn{1}{|c|}{}&\multicolumn{2}{c|}{}&  & Solution Time& &Solution Time\\
\cline{2-3}
 \multicolumn{1}{|c|}{}&\ \  n \ \ &m&Iterations&(seconds)&Iterations&(seconds)\\
\hline
\multirow{10}{*}{Small Size} & 10&500&96&0$^{*}$&88&0\\
& 10&500&34&0&46&0\\
&10&	500&	147&	0.06 &	138&	0.06\\
&10	&500&	386&	0.08 &	375&	0.06\\
&10	&500&	76&	0.06 &	82&	0.06\\
&10	&500&	165&	0.06 &	133&	0.00\\
&10	&500&	147&	0.06 &	91	&0.06\\
&10&	500&	117&	0.06& 	116&	0.06\\
&10&	500&	102	&0.00 &	104&	0.00\\
&10&	500&	110&	0.00 &	81&	0.00\\ \hline
\multicolumn{3}{|c|}{Arithmetic Mean$^{**}$}&	138&	0.04 &	125.4	&0.03\\
\hline
\multirow{10}{*}{Moderate Size} & 30&1800&239&0.39&220&0.47\\
&30&	1800	&234	&0.16	&234	&0.14\\
&30	&1800	&248	&0.16 &	268	&0.12\\
&30	&1800&	236	&0.17 &	265&	0.14\\
&30&	1800&	402&	0.25 &	393&	0.25\\
&30	&1800&	229&	0.14& 	232	&0.09\\
&30&	1800&	285&	0.19&	264&	0.19\\
&30&	1800&	446&	0.11 &	230&	0.11\\
&30	&1800&	246&	0.23 &	233	&0.14\\
&30&	1800&	364&	0.20 &	347	&0.18\\ \hline
 \multicolumn{3}{|c|}{Arithmetic Mean}&	292.9&	0.20 &	268.6&	0.18\\
\hline
\multirow{10}{*}{Large-Scale Size}	&100	&30000&737	&0.53 	&743	&0.53\\
&100	&30000	&658	&0.48 	&710	&0.53\\
&100	&30000	&808	&0.51 	&846	&0.59\\
&100	&30000	&746	&0.56 	&841	&0.66\\
&100	&30000	&713	&0.51 	&780	&0.61\\
&100	&30000	&770	&0.62 	&802	&0.66\\
&100	&30000	&691	&0.51 	&729	&0.56\\
&100	&30000	&661	&0.51 	&680	&0.48\\
&100	&30000	&681	&0.44 	&684	&0.42\\
&100	&30000	&845	&0.53 	&856	&0.62\\ \hline
\multicolumn{3}{|c|}{Arithmetic Mean}	&731	&0.52 	&767.1	&0.57\\
\hline
\multirow{10}{*}{Huge-Scale Size} & 500	&500000	&3081	&63.20 	&3687	&66.64\\
&500	&500000	&3107	&64.21 	&3727	&68.75\\
&500	&500000	&3213	&73.09 	&3663	&76.80\\
&500	&500000	&3171	&64.52 	&3730	&67.39\\
&500	&500000	&3084	&68.56 	&3831	&68.06\\
&500	&500000	&3148	&64.85 	&3691	&67.63\\
&500	&500000	&3096	&67.88 	&3611	&69.58\\
&500	&500000	&3122	&61.28 	&3734	&64.01\\
&500	&500000	&3224	&67.17 	&3829	&74.05\\
&500	&500000	&3096	&66.13 	&3782	&67.45\\
\hline
\multicolumn{3}{|c|}{Arithmetic Mean}	&3134.2	&66.09 	&3728.5	&69.04\\
\hline
\end{tabular}

{\tiny ``*'': the value of computing time is less than $10^{-14}$ seconds.}

{\tiny ``**'': Since there exists ``zero value'', we cannot resort to the geometric mean which is used in Sun and Freud, 2004.}
\caption{Performance of algorithms CD, and WA on small, moderate, large-scale and huge-scale sized problem instances of the minimum volume enclosing ellipsoid problem.}}
\end{table}

Figure 1 shows the progress of $\max \kappa(u)_i$ and $\min\{\kappa(u)_j:u_j >0\}$ under small (subplot 1), moderate (subplot 2), large-scale (subplot 3) and huge-scale(subplot 4) size samples, and the linear convergence of the error $\epsilon$ plotted on a log scale. The result is consistent with that in \cite{Todd2016}, that is, the linear convergence takes hold almost from the first iteration.{\bf Furthermore, we could see that there really exists an inflection point after which the CD algorithm prevails.}

\begin{center}
$\langle$ \bf{Please Insert Figure 1 Approximately Here} $\rangle$
\end{center}

Moreover, we would like to know whether the dimension or the number of points influence the efficiency of the two algorithms. We use two types of test samples (i.e., $n=500, m=1000$ for the HDSN case and $n=20, m=100,000$ for the  LDLN case) to test our assumptions. With respect to the HDSN case in figure 2, the ten subgraphs all show that the CD algorithm outperforms the WA algorithm. However, with respect to the LDLN case in figure 3, the WA algorithm outperforms the CD algorithm in subgraphs 3, 4, 5, 6, 8 and 10, while in four other subgraphs, the CD algorithm is better. This comparison shows that the dimension strongly influences the performance of the algorithms, that is, when the dimension of the data set becomes relatively high, the performance of the WA algorithm becomes worse, while the performance of the CD algorithm becomes better.

\begin{center}
$\langle$ \bf{Please Insert Figure 2 and Figure 3 Approximately Here} $\rangle$
\end{center}

Moreover, we illustrate the reduction of the objective function under three different iteration types, i.e., ``add-'' iteration, ``plus-'' iteration and ``minus-'' iteration. First, during each ``add-'' iteration or ``plus-'' iteration, the reduction of the objective function $h(u)$ is
\[
\begin{array}{cl}
\Delta^{+}(\kappa_{+}) & = h(u) - h(u^{\dagger}) \\
\ & = \ln (2 \kappa_{+} - n) - \ln (\kappa_{+}) + \frac{(n-\kappa_{+})n}{\kappa_{+}^2}, \\
\end{array}
\]
where $\kappa_{+} \geq n$. Second, during each ``minus-'' iteration step, the reduction of the objective function $h(u)$ is:
\[
\begin{array}{cl}
\Delta^{-} (\kappa_{-}) & = h(u) - h(u^{\dagger}) \\
\ & = \ln \frac{\kappa_{-}}{n} + \frac{n}{\kappa_{-}} -1 \\
\end{array}
\]
where $0 \leq \kappa_{-} \leq n$. We illustrate the graph of $\Delta^{+}(\kappa_{+})$ and $\Delta^{-}(\kappa_{-})$ in Figure 4. In Figure 4, the horizontal axis is the value of $\kappa$ and the vertical axis is the amount of decrement of $h(u)$. We consider three cases of $n$ as $n=1$, $n=2$  and $n=3$, illustrated as blue, green and red lines respectively in Figure 4. The solid line represents the ``add-'' and ``plus-'' iteration (namely, the graph of $\Delta^{+}(\kappa_{+})$), and the dashed line represents the ``minus-'' iteration (namely, the graph of $\Delta^{-}(\kappa_{-})$). From Figure 5, we can see the following properties:

(1) $\Delta^{-}(\kappa_{-})$ is monotone decreasing with respect to $\kappa_{-}$, while $\Delta^{+}(\kappa_{+})$ is monotone increasing with respect to $\kappa_{+}$.

(2) $\lim\limits_{\kappa_{+} \rightarrow \infty} \Delta^{+}(\kappa_{+}) = \ln 2$¡£

(3) When $0 \leq \kappa_{-} \leq 0.3n$, $\Delta^{-}(\kappa_{-}) > \ln 2$. In other words, when $0 \leq \kappa_{-} \leq 0.3n$, $\Delta^{-}(\kappa_{-})$ is always larger than $\Delta^{+}(\kappa_{+})$.

\begin{center}
$\langle$ \bf{Please Insert Figure 4 Approximately Here} $\rangle$
\end{center}

\section{More Versions of the CD Algorithm}
In the previous section we considered the ``constant'' stepsize. In this section, we investigate more on the CD algorithm, i.e., first we consider more types of stepsizes and then we compare the randomized coordinate descent algorithm on the MVEE problem with the CD algorithm.
\subsection{Alternative Stepsizes}
We discuss two alternative stepsizes, i.e., the diminishing stepsize and the backtracking line search stepsize.

First we introduce the CD algorithm with the diminishing stepsize. The diminishing stepsize used in the algorithm is depicted as follows:
\begin{algorithm}[H]
\caption{CD algorithm with Diminishing Stepsize for model ($D_1$)}\label{4thAlgorithm}
\begin{algorithmic}[1]
\IF {$\frac{\kappa_{+}-n}{n}> \frac{n-\kappa_{-}}{n}$}
\STATE {$\lambda=\frac{2}{k+2}, u_{k+1}=u_k + \lambda e_{j+}$}
\ELSE
\STATE {$\lambda=\max\{-u_{j-},-\frac{2}{k+2}\},u_{k+1}=u_k + \lambda e_{j-}$}
\ENDIF
\end{algorithmic}
\end{algorithm}

Since the diminishing stepsize can be defined as ``$\{\lambda_k | \sum\limits_{k=1}^{\infty} \lambda_k = \infty, \lim_{k \rightarrow \infty} \lambda_k = 0\}$'', in the algorithm above, we take $\lambda_k = \frac{2}{k+2}$ as a special case. The convergence of the CD algorithm with diminishing stepsize is easy to prove. Since the objective function $h(u)$ is ``algorithmic'' coordinate-wise smooth w.r.t the CD algorithm, i.e., $L_i$ - coordinate wise smooth, where $L_i = \kappa(u)_i^2$ (when $\kappa(u)_i \leq n$) or $n \kappa(u)_i$ (when $\kappa(u)_i < n$), therefore the stepsize chosen in the interval $(0,\frac{2}{L_i})$ is suitable for convergence. Moreover, the diminishing stepsize $\lambda_k \rightarrow 0$ with $k \rightarrow \infty$, therefore there exists a $\bar{k}$ such that $\forall k > \bar{k}$, $\lambda_k \in (0,\frac{2}{L_i})$, and the convergence is guaranteed.
Obviously the diminishing stepsize is becoming arbitrarily small during the iteration, therefore we prefer to use the ``constant'' stepsize.

Then we discuss the backtracking line search stepsize, which is defined as:
\begin{algorithm}[H]
\caption{Backtracking Line Search Stepsize}\label{5thAlgorithm}
\begin{algorithmic}[1]
\STATE {Input: An iterative direction $e_i$, $\alpha=0.5$, $\beta \in (0,1)$}
\WHILE {$h(u+\lambda e_i) - h(u) > \alpha \lambda \kappa(u)_i$}
\STATE {$\lambda=\beta \lambda$}
\ENDWHILE
\end{algorithmic}
\end{algorithm}

The convergence of the backtracking line search can be obtained similarly to the diminishing stepsize. It is noted that the backtracking line search is also slow than the ``constant'' stepsize. Since during the early stage of the iteration, the value of $\kappa(u)_i, i=1,\ldots,m$ is far from $n$, and the interval to guarantee convergence is relatively small. However, as the iteration grows, $\kappa(u)_i,i=1,\ldots,m$ is getting close to $n$, and the interval becomes relatively large. Therefore, the backtracking line search strategy determines the stepsize at the early stage of the iteration, and will not change during rest the iteration. However, as the iteration grows, the ``constant'' stepsize $\frac{1}{\kappa(u)_{j+}^2}$ or $\frac{1}{n\kappa(u)_{j-}}$ is becoming large. Therefore the CD algorithm with ``constant'' stepsize is becoming faster during the iteration. Another deficiency of the backtracking line search is that it needs to compute the value of $h(u_k)$ at each iteration.
\subsection{Randomized Coordinate Descent Method}
The CD algorithm proposed in this paper is much similar to the algorithm (algorithm (1.3) on page 1) in \cite{Nesterov2011}. Nesterov pointed out that ``the CD algorithm requires computation of the whole gradient vector. However, if this vector is available, it seems better to apply the usual full-gradient methods''. However as we pointed previously, the full gradient method may be inefficient (even not applicable) in computing the MVEE, and therefore the MVEE problem may be a good example to show the wide application prospect of the coordinate descent method.

In \cite{Nesterov2011}, Nesterov also proposed the randomized coordinate descent method (RCDM), whose coordinate axis selection rule is random, and it is proved to be more efficient than the ``non-randomized selection rule'' (such as the ``Gauss - Southwell's Rule''), since randomized selection rule may not require to compute the full gradient during each iteration. However with respect to the MVEE problem, this is not the case. From Lemma 1 we know that the objective function $h(u)$ of model ($D_1$) is ``algorithmic coordinate-wise smooth'', therefore we need the full gradient information so as to determine the ``random counter'' (methods to choose coordinate axis randomly). The random counter $R$ proposed here is slightly different from that in \cite{Nesterov2011}. It generates an integer number $i \in \{1, \ldots, m\}$ with probability
\[
p_i = |\nabla_i h(u)| (\sum\limits_{j=1}^m |\nabla_j h(u)|)^{-1}, i=1,\ldots,m.
\]
Based on the work of \cite{Nesterov2011}, we propose the randomized coordinate descent (RCD) algorithm here to address the MVEE as follows:
\begin{algorithm}[H]
\caption{RCD algorithm to compute the MVEE}\label{6thAlgorithm}
\begin{algorithmic}[1]
\STATE {Input:$X=\{x_1, \ldots, x_m\}$, $\epsilon > 0$}
\STATE {Initialization: Using Khachiyan's or Kumar - Yildirim's initialize scheme to initialize $u_0$}
\WHILE {not converged}
\STATE {Choose $j = R$}
\STATE {$u_{k+1}=P_{u \geq 0}(u_k + \frac{1}{L_j}\nabla h(u_k)_j)$}
\ENDWHILE
\STATE {Output: $u^{*}=u_k$}
\end{algorithmic}
\end{algorithm}
It is obviously that the the random counter $R$ in \cite{Nesterov2011} applies the ``Randomized Nesterov's Rule'' while $R$ in this paper applies the ``Randomized Gauss - Southwell's Rule''. The convergence results of the RCD algorithm can be referred to the \cite{Nesterov2011}. However the numerical tests shows that the solution of the RCD algorithm is less accurate than the CD algorithm.
\subsection{Computational Performance of the CD Algorithm on Various Stepsizes}
Table \ref{tab4} shows the performance of the CD algorithm of various stepsizes.

\begin{table}\label{tab4}
\centering
{\footnotesize \begin{tabular}{|c|c|c|c|c|c|}
\hline
\multicolumn{1}{|c|}{\multirow {4}{*}{Sample Size}}&\multicolumn{1}{c|}{\multirow {3}{*}{Error}}&\multicolumn{4}{c|}{CD Algorithm}\\
\cline{3-6}
\multicolumn{1}{|c|}{}& \multicolumn{1}{c|}{}& \multicolumn{2}{c|}{``Constant'' Stepsize}&\multicolumn{2}{c|}{Diminishing Stepsize} \\
\cline{2-6}
\multicolumn{1}{|c|}{}&\multicolumn{1}{c|}{}&  & Solution Time& &Solution Time\\
\multicolumn{1}{|c|}{}&$\epsilon$ &Iterations&(seconds)&Iterations&(seconds)\\
\hline
\multirow{3}{*}{Small Size} & $10^{-2}$&9&0.03&62&0.05\\
& $10^{-3}$&31&$0.00^{*}$&10002&1.39\\
&$10^{-4}$&72&0.03&	$100000^{**}$&19.64\\
\hline
\multirow{3}{*}{Moderate Size} & $10^{-2}$&81&0.05&72&0.03\\
& $10^{-3}$&118&0.06&69&0.03\\
&$10^{-4}$&209&0.11&$100000^{**}$&47.88\\
\hline
\multirow{3}{*}{Large-Scale Size} & $10^{-2}$&229&2.26&957& 6.77\\
& $10^{-3}$&313&2.48&2853&19.94\\
&$10^{-4}$&475&3.88&$100000^{**}$&696.53\\
\hline
\end{tabular}}

{\tiny ``*'': the value of computing time is less than $10^{-14}$ seconds.}

{\tiny ``**'': max iteration steps have been reached.}
\caption{Performance of algorithms CD with ``constant'' and diminishing stepsizes on small, moderate, large-scale sized problem instances of the minimum volume enclosing ellipsoid problem.}
\end{table}

In table 3, the error bounds are set to be $10^{-2}$, $10^{-3}$ and $10^{-4}$, which means accuracy of the solution is not high. With respect to the ``constant'' stepsize, the CD algorithm converges fast in all samples. However the error bound made by the diminishing stepsize cannot be reduced to $10^{-4}$. The numerical test is consistent with our theoretical analysis on the stepsize.
\subsection{Comparison of the CD and RCD algorithm}
Figure 5 shows the computational performance of the CD and RCD algorithm. We only test two samples, the small sized sample ($n=10,m=500$) in subplot 1 and 2, the moderate sized sample ($n=30,m=1800$) in subplot 3 and 4. In all these 4 subplots, the horizontal axis represents the number of iterations, and the vertical axis represents the $\log$ of the error bound. With respect to the small sized sample, subplot 1 shows the convergence result of the CD algorithm, and the error bound reduced below $10^{-7}$ after 130 iteration steps. Comparatively, subplot 2 shows the convergence result of the RCD algorithm, and the error bound cannot reduced below $10^{-3}$ in $1 \times 10^4$ (maximum iteration steps ) iteration steps. With respect to the moderate sized sample, subplot 3 shows the convergence result of the CD algorithm, and the error bound reduced below $10^{-7}$ after 350 iteration steps. Comparatively, subplot 4 shows the convergence result of the RCD algorithm, and the error bound cannot reduced below $10^{-2}$ in $1 \times 10^4$ iteration steps. Moreover the ``convergence curve'' of the RCD algorithm is ladder shaped, which means the convergence rate of the RCD algorithm is very slow. Therefore the CD algorithm with ``Gauss - Southwell's Rule'' is much efficient than RCD algorithms at least in the MVEE problem. This result is inconsistent with the theoretical results in \cite{Nesterov2011}, but consistent with the results in \cite{Nutini2015}.
\begin{center}
$\langle$ \bf{Please Insert Figure 5 Approximately Here} $\rangle$
\end{center}
\section{Conclusions}
This paper proposed the coordinate descent algorithm with Gauss - Southwell's Rule, which complements the widely used WA algorithm to address the Minimum Volume Enclosing Ellipsoid problem. In particular, the CD algorithm is slightly more efficient when the dimension of the data set increases. Since the MVEE problem is the basis of many practical problems and applications, such as nonlinear support vector machines and optimal design, the CD algorithm can be seen as a benefit for solving real-world problems. Specifically, the theoretical contribution of this paper is four-fold:
\begin{enumerate}
\item The proposition of a new efficient algorithm: the CD algorithm. This algorithm inherits the property of sublinear convergence and behaves slightly more efficient, especially for the high dimensional data sets;

\item The discovery of some new properties of the objective function of the MVEE problem. We proved the ``algorithmic'' coordinate smoothness of the objective function w.r.t the CD algorithm.

\item The disclosure of the relationship of the first-order oracle algorithm and the linear optimization oracle algorithm in the MVEE problem. Based on the new properties of the objective function discovered in this paper, we found that the FW-K algorithm uses the ``Nesterov's Rule'' to choose coordinate axis, and the WA algorithm uses the ``Gauss - Southwell's Rule''. This finding connects the first-order oracle algorithm with the linear optimization oracle algorithm. Moreover this finding explains the reason why the WA algorithm is more efficient than the FW-K algorithm.

\item The application of the MVEE to address the big data DEA problem. We unified the dual simplex method, sensitivity analysis theory in linear programming, and MVEE together to design a unified MVEE-perturbation algorithm which can solve the big data DEA problem efficiently. Since DEA is widely used in economic and management science, this algorithm would be useful in real-world application.
\end{enumerate}

We also note two directions for future research based on this work. The first point is that we did not take the DRN algorithm into account, since many researchers note that it is not applicable with respect to huge-scale data sets. The reason is that the DRN algorithm resorts to the Hessian of the objective function at each iteration and memory problems arise when the dimension grows. Recently, the Newton-sketch technique was proposed to reduce the dimension of the data sets, which achieved a super-linear convergence rate (see \cite{PW2015}). Therefore the combination of the DRN algorithm and the Newton-sketch technique is a good point for future research. The other point is that \cite{Lu2016} found the dual problem of the MVEE is $1$-smooth relative to $-\sum\limits_{j=1}^n \ln x_j$, and presented a new framework to investigate the MVEE problem. The work of \cite{Lu2016} extended the notion of smoothness and strong convexity, which can be applied to many more problems than the classical framework.

\section{Appendix}
\begin{proposition}\quad
(1) When $\kappa_{+} \geq n > 0$, we have:
\[
\ln(2-\frac{n}{\kappa_{+}}) - \frac{n(\kappa_{+} - n)}{\kappa_{+}^2} - \frac{(n-\kappa_{+})^2}{2\kappa_{+}^2} \geq 0;
\]

(2) When $0 \leq \kappa_{-} \leq n$, we have:
\[
\ln(\frac{\kappa_{-}}{n}) + \frac{n}{\kappa_{-}} - 1 - \frac{(n-\kappa_{-})^2}{2n\kappa_{-}} \geq 0;
\]

(3) When $\kappa_{-} \leq n$, and $\frac{\kappa_{-}-n}{n\kappa_{-}} < - u_{j-} \leq 0$, we have:
\[
n u_{j-} + \ln (1-u_{j-} \kappa_{j-}) \geq 0
\]
\end{proposition}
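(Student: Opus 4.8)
The Proposition lists three scalar inequalities, each of which asserts that a specific one-variable function is nonnegative on a given interval. The natural strategy is the same for all three: define the function as the difference claimed to be $\geq 0$, reduce the number of free parameters by exploiting homogeneity, and then verify nonnegativity by examining the value at a distinguished point together with the sign of the derivative. In every case the functions are smooth on the relevant interval, so elementary calculus suffices; no deep machinery is needed.

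\textbf{Parts (1) and (2).} For part (1) I would set $t = n/\kappa_{+}$, so that $t \in (0,1]$ since $\kappa_{+} \geq n > 0$, and rewrite the expression as
\[
\phi(t) = \ln(2-t) - t(1-t) - \tfrac{1}{2}(1-t)^2.
\]
The claim becomes $\phi(t) \geq 0$ on $(0,1]$. I would first check the endpoint $t=1$, where $\ln 1 = 0$ and both remaining terms vanish, giving $\phi(1)=0$. Then I would differentiate and show $\phi'(t) \leq 0$ on $(0,1)$, so that $\phi$ decreases down to its zero at $t=1$ and is therefore nonnegative on the whole interval. Part (2) is handled identically with the substitution $s = \kappa_{-}/n \in (0,1]$, reducing the inequality to $\psi(s) = \ln s + 1/s - 1 - \tfrac{1}{2}(1-s)^2/s \geq 0$; again $\psi(1)=0$, and I would establish monotonicity of $\psi$ (decreasing in $s$) so that the minimum over $(0,1]$ is attained at $s=1$.

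\textbf{Part (3).} Here I would write $w = -u_{j-} \in [0, \tfrac{n-\kappa_{-}}{n\kappa_{-}})$ (note $\kappa_{-} \leq n$ forces the upper bound to be nonnegative) and consider $\chi(w) = -nw + \ln(1 - w\kappa_{-})$, wait---matching signs to the stated $n u_{j-} + \ln(1 - u_{j-}\kappa_{j-})$ with $u_{j-} = -w$ gives $\chi(w) = -nw + \ln(1 + w\kappa_{-})$. At $w=0$ we have $\chi(0)=0$, and $\chi'(w) = -n + \kappa_{-}/(1+w\kappa_{-})$. Since $\kappa_{-} \leq n$ and the fraction $\kappa_{-}/(1+w\kappa_{-}) \leq \kappa_{-} \leq n$ for $w \geq 0$, we get $\chi'(w) \leq 0$, so $\chi$ decreases from $0$ and would go negative, which suggests the inequality as literally transcribed may require the reverse reading of the sign convention. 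The cleanest route is therefore to track signs very carefully against the actual iterate update $u^{\dagger} = u + \lambda e_{j-}$ with $\lambda = -u_{j-}$ from Lemma~2, so that the quantity being bounded is genuinely $h(u)-h(u^{\dagger}) \geq 0$ and the monotonicity argument delivers the correct conclusion.

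\textbf{Main obstacle.} The routine calculus (endpoint evaluation plus one sign-of-derivative check per part) is not the difficulty; the real care is in part (3), where the sign bookkeeping between the stepsize $\lambda = -u_{j-}$, the projection onto $\{u \geq 0\}$, and the direction $e_{j-}$ must be kept consistent with the decrement formula $h(u)-h(u^{\dagger}) = \ln(1+\lambda\kappa(u)_i) - n\lambda$ derived after Lemma~2. Once the correct orientation is fixed, each inequality collapses to ``a smooth function equals zero at one endpoint and is monotone toward it,'' which is the uniform mechanism I would use throughout.
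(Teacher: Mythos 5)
Your parts (1) and (2) are correct and are essentially the paper's own argument: the paper works with $g_1(\kappa_+)$ and $g_2(\kappa_-)$ directly instead of your normalized variables $t=n/\kappa_+$ and $s=\kappa_-/n$, but the mechanism is identical (value zero at $\kappa=n$ plus one sign-of-derivative check), and the derivatives you promise to compute do come out with the claimed signs: $\phi'(t)=-\frac{(1-t)^2}{2-t}\leq 0$ and $\psi'(s)=-\frac{(1-s)^2}{2s^2}\leq 0$.

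Part (3), however, has a genuine gap, and it is a domain error on your side, not a sign error in the statement. You set $w=-u_{j-}$ and declared $w\in[0,\tfrac{n-\kappa_-}{n\kappa_-})$; but $u_{j-}$ is a coordinate of a feasible point $u\geq 0$, and indeed the hypothesis $\frac{\kappa_--n}{n\kappa_-}<-u_{j-}\leq 0$ says precisely that $0\leq u_{j-}<\frac{n-\kappa_-}{n\kappa_-}$, i.e. $w\in\bigl(\tfrac{\kappa_--n}{n\kappa_-},0\bigr]$ is \emph{nonpositive}. Your own computation then finishes the proof rather than refuting the statement: on this interval $1+w\kappa_->\frac{\kappa_-}{n}$, so $\chi'(w)=-n+\frac{\kappa_-}{1+w\kappa_-}<0$, and a function decreasing on $\bigl(\tfrac{\kappa_--n}{n\kappa_-},0\bigr]$ with $\chi(0)=0$ is nonnegative to the \emph{left} of $0$, which is exactly where $w$ lives. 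So no "reverse reading of the sign convention" is needed, and ending with an instruction to "track signs carefully" leaves this part unproven. Equivalently, in the paper's variable $t=u_{j-}\in[0,\tfrac{n-\kappa_-}{n\kappa_-})$ one has $g_3(t)=nt+\ln(1-t\kappa_-)$, $g_3(0)=0$, and $g_3'(t)=n-\frac{\kappa_-}{1-t\kappa_-}>0$ because $1-t\kappa_->\frac{\kappa_-}{n}$; note in passing that the paper's printed proof asserts $g_3'\leq 0$ and "monotone decreasing," which is itself a typo — the derivative is positive on this interval, and that is what makes $g_3\geq 0$ follow from $g_3(0)=0$.
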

\begin{proof}\quad
(1) When $t \geq n$, we have $g_1(t) = \ln(2-\frac{n}{t}) - \frac{n(t - n)}{t^2} - \frac{(n-t)^2}{2t^2}$, and $g_1'(t)=-\frac{n}{t}(\frac{(n-t)^2}{t^2 (n-2t)}) \geq 0$. Therefore, $g_1(t)$ is monotone increasing with $t \geq n$. Furthermore, since $g_1(n)=0$, we have $g_1(t) \geq 0$ when $t \geq n$ and Proposition (1) follows.

(2) When $t \leq n$, we have $g_2(t) = \ln(\frac{t}{n}) + \frac{n}{t} - 1 - \frac{(n-t)^2}{2nt}$ and $g_2'(t) = -\frac{(n-t)^2}{2nt^2} \leq 0$. Therefore, $g_2(t)$ is monotone decreasing with $t \leq n$. Moreover, since $g_2(n)=0$, thus when $t \leq n$, we have $g_2(t) \geq 0$ and Proposition (2) follows.

(3) When $0<t<\frac{n-\kappa_{-}}{n\kappa_{-}}$ and $\kappa_{-} < n$, we have $g_3(t) = n t + \ln (1- t \kappa_{-})$ and $g_3'(t) = n- \frac{\kappa_{-}}{1-\kappa_{-}t}$. Using $t < \frac{n-\kappa_{-}}{n\kappa_{-}}$ we have $g_3'(t) \leq 0$. Therefore, $g_3(t)$ is monotone decreasing with $t > 0$. Moreover, since $g_3(0)=0$,  we have $g_3(t) \geq 0$ when $t > 0$ and Proposition (3) follows.
\end{proof}
\Acknowledgements{This work was supported by National Natural Science Foundation of China (Grant No. 71601117).}


\end{document}